\documentclass[12pt]{article}
\usepackage{graphicx}
\usepackage{color}
\usepackage{amsmath}
\usepackage{amssymb}
\usepackage{amsthm}
\usepackage{mathtools}
\usepackage{float} 
\usepackage{caption}
\usepackage{chngcntr}

\usepackage{color}
\usepackage{url}
 \usepackage[top=2cm,	bottom=2cm,left=3cm,right=3cm]{geometry}
 \usepackage{youngtab}

\usepackage{hyperref}
\hypersetup{colorlinks=false, linkbordercolor={1 1 1}, citebordercolor={1 1 1}, urlbordercolor={1 1 1}} 
\usepackage[normalem]{ulem}

\setlength{\marginparwidth}{22mm}

\newtheorem{theorem}{Theorem}
\newtheorem{proposition}[theorem]{Proposition}
\newtheorem{lemma}[theorem]{Lemma}
\newtheorem{corollary}[theorem]{Corollary}

\newtheorem{remark}[theorem]{Remark}
\def\paragraph#1{{\vskip3mm\noindent \bf #1 }}

\newcommand{\Z}{\mathbb Z}
\newcommand{\N}{\mathbb N}
\newcommand{\cA}{\mathcal A}
\newcommand{\cX}{\mathcal X}

\newcommand{\cZ}{\mathcal Z}

\newcommand{\cE}{\mathcal E}
\newcommand{\cS}{\mathcal S}

\newcommand{\cQ}{\mathcal Q}
\newcommand{\hcX}{\widehat{\cX}}

\newcommand{\cF}{\mathcal F}

\newcommand{\hmu}{\widehat{\mu}}
\newcommand{\hpi}{\widehat{\pi}}

\newcommand{\nusola}{\nu_\alpha}
\newcommand{\Zsola}{Z_\alpha}
\newcommand{\uvep}{\underline\vep}
\newcommand{\ux}{\underline x}

\newcommand{\tts}{\mathtt s}

\def\q{q}

\newcommand{\blue}[1]{{\color{blue} {#1}}}
\newcommand{\red}[1]{{\color{red} {#1}}}

\newcommand{\dd}{\mathrm{d}}

\def\sqr{\vcenter{
         \hrule height.1mm
         \hbox{\vrule width.1mm height2.2mm\kern2.18mm\vrule width.1mm}
         \hrule height.1mm}}                  
\def\square{\ifmmode\sqr\else{$\sqr$\vskip 3mm}\fi}

\newcommand{\one}{{\bf 1}\hskip-.5mm} 

\newcommand{\nn}{\nonumber}
\newcommand{\vep}{\varepsilon}
\DeclareMathOperator{\Palm}{Palm}

\definecolor{cmm}{rgb}{0,.6,0.4}
\definecolor{cmm'}{rgb}{.6,0,.4}

\parskip 2mm
\parindent0pt

\title{\textsc{BBS invariant measures with independent soliton components}}
\date{\today}
\author{Pablo A. Ferrari, Davide Gabrielli}

\begin{document}

\maketitle
 
\begin{abstract}
The Box-Ball System (BBS) is a one-dimensional cellular automaton in $\{0,1\}^\Z$ introduced by Takahashi and Satsuma \cite{TS}, who also identified conserved sequences called \emph{solitons}. Integers are called boxes and a ball configuration indicates the boxes occupied by balls. For each integer $k\ge1$, a $k$-soliton consists of $k$ boxes occupied by balls and $k$ empty boxes (not necessarily consecutive). Ferrari, Nguyen, Rolla and Wang \cite{FNRW} define the $k$-slots of a configuration as the places where $k$-solitons can be inserted. Labeling the $k$-slots with integer numbers, they define the $k$-component of a configuration as the array $\{\zeta_k(j)\}_{j\in \mathbb Z}$ of  elements of $\Z_{\ge0}$ giving the number $\zeta_k(j)$ of $k$-solitons appended to $k$-slot $j\in \mathbb Z$. They also show that if the Palm transform of a translation invariant distribution $\mu$ has independent soliton components, then $\mu$ is invariant for the automaton. We show that for each $\lambda\in[0,1/2)$ the Palm transform of a product Bernoulli measure with parameter $\lambda$ has independent soliton components and that its $k$-component is a product measure of geometric random variables with parameter $1-q_k(\lambda)$, an explicit function of $\lambda$. The construction is used to describe a large family of invariant measures with independent components under the Palm transformation, including Markov measures.

\bigskip

\noindent {\em Keywords}: Box-Ball System, soliton components, conservative cellular automata

\smallskip

\noindent{\em AMS 2010 Subject Classification}:
37B15, 37K40, 60C05 

\end{abstract}
\section{Introduction}
\label{s1}
Takahashi and Satsuma  \cite{TS}, referred to as TS in the sequel, introduced the \emph{Box-Ball System} (BBS), a cellular automaton describing the deterministic evolution of a finite number of balls on the infinite lattice $\mathbb Z$. A ball configuration $\eta$ is an element of $\{0,1\}^\Z$, where $\eta(z)=1$ indicates that there is a ball at box $z\in\Z$. A carrier visits successively boxes from left to right picking balls from occupied boxes and depositing one ball, if carried, at the current visited box, if empty. We denote by $T\eta$ the configuration obtained after the carrier has visited all boxes and $T^t\eta$ the configuration obtained after iterating this procedure $t$ times, for positive integer $t$. 

An example of the evolution of the Box-Ball dynamics is shown by the following example:
\begin{align}
\eta &\qquad 01101011010001111010000\nonumber\\
\textrm{Carrier\ Load}&\qquad 01212123232101234343210\label{carrier-load}\\
T\eta &\qquad 00010100101110000101111\nonumber
\end{align}
The configurations $\eta$ and $T\eta$ are identically $0$ outside the finite window shown. In the second line we write the number of balls which are transported by the carrier; we assume that the carrier is always empty outside of the window shown in the picture.

TS show the existence of \emph{basic sequences}, conserved quantities in the BBS called \emph{solitons} by Levine, Lyu and Pike \cite{levine-lyu-pike}. In the absence of other solitons, a $k$-soliton consists of $k$ successive occupied boxes followed by $k$ successive empty boxes. In this case, the $k$-soliton travels at speed $k$, because the carrier picks the $k$ balls and deposits them in the $k$ empty boxes of the soliton. Solitons with different speeds ``collide'' but still can be identified at collisions, see \S\ref{s4} for a description of the algorithm proposed by TS to identify solitons. A $k$-soliton consists always of $k$ occupied boxes and $k$ empty boxes which are however not necessarily consecutive; different solitons occupy disjoint sets of boxes.

A configuration of balls can be mapped to a walk that jumps one unit up at occupied boxes and one unit down at empty boxes \cite{CroydonKatoSasadaTsujimoto17} \cite{FNRW}. The \emph{excursions} of the walk are the pieces of configuration between two consecutive down \emph{records}. Walks coming from configurations with density of balls less than $\frac12$ have positive density of records, hence any box is either a record or belongs to a finite excursion. Ferrari, Nguyen, Rolla and Wang , referred to as FNRW in the sequel, introduce a soliton decomposition of each ball configuration. The soliton decomposition of an infinite configuration of balls is obtained applying the TS algorithm independently to each 
single finite excursion. See also \cite{FG} for a different soliton decomposition related to the trees underlying excursions.

A soliton decomposition of a ball configuration $\eta$ is a codification of $\eta$ in terms of the solitons and their spatial combinatorial arrangement. 
It consists of an infinite array $\zeta=\left(\zeta_k\right)_{k\in \mathbb N}$ where the $k$-\emph{component} $\zeta_k=\left(\zeta_k(j)\right)_{i\in \mathbb Z}$ has entries $\zeta_k(j)\in \mathbb Z_{\geq 0}$ representing \emph{the number of $k$ solitons appended to the $k$ slot number j}, for $j\in\Z$. The \emph{slots} are special lattice sites (to be determined by the configuration of particles) where the solitons can be appended. A $k$-slot is a slot where solitons up to order $k$ may be appended. Records are always slots of any order. We use the notation $D\eta:=\zeta$ and $D_k\eta:=\zeta_k$. 
FNRW proved that the $k$-component of the configuration $T\eta$ is a translation of the $k$-component of $\eta$, the amount translated depending on the $m$-components of $\eta$ for $m>k$.


Since the soliton decomposition is performed independently inside each excursion, it is convenient to introduce the finite array of components associated to one single excursion. This combinatorial object is called a \emph{slot diagram}.
The components of an infinite configuration of balls is obtained suitably joining the slot diagrams of its excursions.

Let $\mu$ be a translation invariant measure on the set of ball configurations with density less than 1/2 and call  $\hmu$ the record Palm measure of $\mu$, defined as the measure $\mu$ conditioned to have a record at the origin. FNRW show that if $\mu$ is translation invariant and $\hmu$ has independent $k$-components, then $\mu$ is invariant for the dynamics; we state their result in  Theorem \ref{t1} later. FNRW also study the asymptotic speed of solitons when the initial distribution of balls is translation invariant and ergodic. 

Let $\lambda\in[0,1)$ and call $\pi_\lambda$ the product measure of Bernoulli$(\lambda)$ random variables on the space $\{0,1\}^\Z$. Let $\widehat\pi_\lambda$ be its record Palm-measure
. In this paper we show that for $\lambda\in[0,\frac12)$, if $\eta$ is distributed according to $\widehat\pi_\lambda$, then the components $(D_k\eta)_{k\ge 1}$ are independent and each component $(D_k\eta(j))_{j\in\Z}$ consists of i.i.d.~Geometric random variables with parameter $1-q_k(\lambda)$,  computed later in Corollary \ref{c3}. We construct many other measures with independent components, being each component i.i.d.~Geometric random variables. A particular case is the distribution $\pi_{Q}$ of a stationary Markov chain with state space $\{0,1\}$ and transitions $Q(1,0)>Q(0,1)$, to guarantee that the density of $1$'s is less than $\frac12$; these are also nearest neighbor Ising-like measures with a negative external field.

The independence of components combined with Theorem \ref{t1} imply that $\pi_\lambda$ and the Ising-like measures are invariant for BBS. These facts were proven directly by Croydon, Kato, Sasada and Tsujimoto \cite{CroydonKatoSasadaTsujimoto17}, using reversibility of the carrier process illustrated in~\eqref{carrier-load}; see also \cite{FNRW}.

To prove the results just described we introduce two families of probability measures on the set of finite excursions.
The first family, contain measures called $\nu_\alpha$ indexed by $\alpha=(\alpha_k)_{k\ge1}$, a collection of parameters in $[0,1)$ satisfying a summability condition. Under $\nu_\alpha$ each excursion has weight $\prod_{k\ge1}\alpha_k^{n_k}$, where $n_k$ is the number of $k$-solitons in the excursion. The second family, called $\varphi_q$ is indexed by parameters $q_k\in[0,1)$, $k\ge1$, also satisfying some summability condition. Conditioning on the components $m> k$ of the slot diagram of the random excursion with law $\varphi_q$, the distribution of the $k$-component is a product of $s_k$ geometric distributions with  mean $q_k/(1-q_k)$, where $s_k$ is the number of $k$-slots determined by the $m$-components, for  $m$ bigger than $k$.
Theorem \ref{teonuovo}, one of the main results of this paper, shows a bijection between those two families with an explicit relation between $\alpha$ and $q$, see \eqref{ppc33} later. Under suitable assumptions, the resulting random excursion has finite mean length. 

We then consider a sequence of i.i.d.~excursions with law $\nu_\alpha$ and finite expected  excursion length and construct a ball configuration $\eta$ by putting a record at the origin and concatenating the excursions separated by records; the  distribution of $\eta$ is a record translation invariant measure. We show that the components $(D_k\eta)_{k\ge1}$ are independent and that $(D_k\eta(j))_{j\in\Z}$ are i.i.d.~Geometric random variables with mean $q_k/(1-q_k)$, where $q$ is a function of  $\alpha$. Using the inverse-Palm transformation, we obtain a translation invariant and $T$-invariant measure. The $T$-invariance is deduced from the independence of the components, as explained before. We show that product of Bernoulli and Ising-like measures conditioned to have a record at the origin have i.i.d.~excursions with distribution $\nu_\alpha$ for suitable $\alpha$, which in turn implies that have independent components and are $T$-invariant. 

\smallskip

The paper is organized as follows. 

In Section \ref{s4} we introduce notation, illustrate the soliton decomposition, define the slot diagrams and show that
they are in bijection with excursions.

In Section \ref{RE} we introduce the families of probability measures on the set of excursions parametrized by an infinite collection of parameters and show in Theorem \ref{teonuovo} that these are two different parametrization of the same family of probability measures with a non trivial relationship between the two families of parameters.

In Section \ref{imb} we obtain $T$-invariant measures concatenating i.i.d.~random excursions with distribution $\nu_\alpha$. This is obtained in Theorem  \ref{t2} by the combination of Theorems \ref{teo9} and \ref{t1}. These are the remaining main results of the paper.

\section{Excursions, solitons  and slot diagrams}\label{s4}

In this Section we define \emph{excursions}, describe a variant of the Takahashi-Satsuma Algorithm in \cite {TS} to identify solitons in the excursions and  call \emph{slot diagram} the FNRW soliton decomposition of an excursion.

\smallskip

A configuration of balls is an element $\eta\in \{0,1\}^\mathbb Z$, where for each \emph{box} $y\in\Z$, $\eta(y)=1$ means that there is a \emph{ball} at box $y$, otherwise $\eta(y)=0$ means $y$ is empty. In this Section we consider configurations with a finite number of balls. 

Map a ball configuration $\eta$ to a  walk $\xi=W\eta\in\Z^\Z$ defined up to a global additive constant by
\begin{align}
\label{x11}
\xi(z)-\xi(z-1)=2\eta(z)-1.
\end{align}
We fix the constant by choosing $\xi(0)=0$.
The configuration of balls is completely determined by the walk and if $\xi=W\eta$ we write also $\eta=W^{-1}\xi$. 

We call $z\in \mathbb Z$ a \emph{record} for $\xi$ if $\xi(z)<\xi(z')$ for any $z'<z$. This depends just on $\eta$ as $\xi(z)-\xi(z') = \sum_{y=z'+1}^{z}(2\eta(y)-1)$ and we can therefore say equivalently that $z$ is a record for the configuration $\eta$.

\paragraph{Excursions} 
We introduce the set $\mathcal E$ of \emph{finite soft excursions}.
An element $\vep\in \mathcal E$ is a finite walk which starts and ends at
zero, it is always non-negative and it has length $2n(\vep)$. More precisely
$\vep=\big( \vep(0),\dots, \vep(2n(\vep))\big)$ with the constraints $|\vep(z)-\vep(z-1)|=1$, $\vep(z)\ge 0$ for $0\le z\le n(\vep)$ and
$\vep(0)=\vep(2n(\vep))=0$.
The empty excursion $\emptyset$ is also an element of $\cE$ with $n(\emptyset)=0$.
We call $\mathcal E_n$ the set of soft finite excursions of length $2n$, hence
$\mathcal E=\cup_{n=0}^{+\infty}\mathcal E_n$. It is well known \cite{MR1676282} that the number of excursions of length $2n$ is given by
\begin{equation}\label{catalan}
|\mathcal E_n|=\frac{1}{n+1}\binom{2n}{n}\,;
\end{equation}
the right hand side is the Catalan number $C_n$. In the following we call soft excursions simply excursions.

The underlying configuration of balls of an excursion $\vep$ is called $W^{-1}\vep$ and is defined by
$$
W^{-1}\vep(z):=\frac{\vep(z)-\vep(z-1)+1}{2}\,,\qquad z=1,\dots, 2n(\vep)\,.
$$ 
This is a configuration of balls restricted to the interval $[1,2n(\vep)]$ but we can naturally extend it to a configuration on the whole axis $\mathbb Z$ just considering empty all the remaining boxes. This corresponds to extend the excursion to an infinite walk adding to the left and to the right just downward oriented steps. 

We use the same notation both for configuration of balls/walks restricted to a finite interval and for  configuration of balls/walks on the whole $\mathbb Z$ axis. The exact meaning will be clear from the context. We call an excursion both the walk $\vep$ and the corresponding configuration of balls $W^{-1}\vep$, since they are bijectively related.

\paragraph{Takahashi-Satsuma Identification of solitons}
We describe a variant of the Takahashi-Satsuma algorithm \cite{TS} to identify the solitons of a finite ball configuration $\eta$. The empty configuration $\eta(z)\equiv 0$ has no solitons. Assume $\eta$ is nonempty. A \emph{run} of $\eta$ is any segment $[z,y]$ with $-\infty\le z\le y\le \infty$ such that $\eta(z) = \eta(z')$, for $z'\in[z,y]$, $\eta(z-1)\neq \eta(z)$ if $z>-\infty$ and $\eta(y)\neq\eta(y+1)$ if $y<\infty$. The ball configuration underlying an excursion (considered on the whole lattice) has two semi-infinite runs and a finite number of finite runs. The algorithm is the following:

If there are finite runs in the configuration, do:
\begin{enumerate}
	\item
	Let $k$ be the size of the smallest run in the configuration.
	Select the leftmost run of size $k$. 
        Set the restriction of $\eta$ to the $k$ boxes of this run and the first $k$ boxes of the
successive run as a $k$-soliton.

	\item Ignore the boxes belonging to already identified solitons, update the runs of the remaining configuration and go to 1.
        \end{enumerate}
    
	
 \noindent\begin{minipage}{\linewidth}\begin{center}	
	\includegraphics[width=.8\textwidth ]{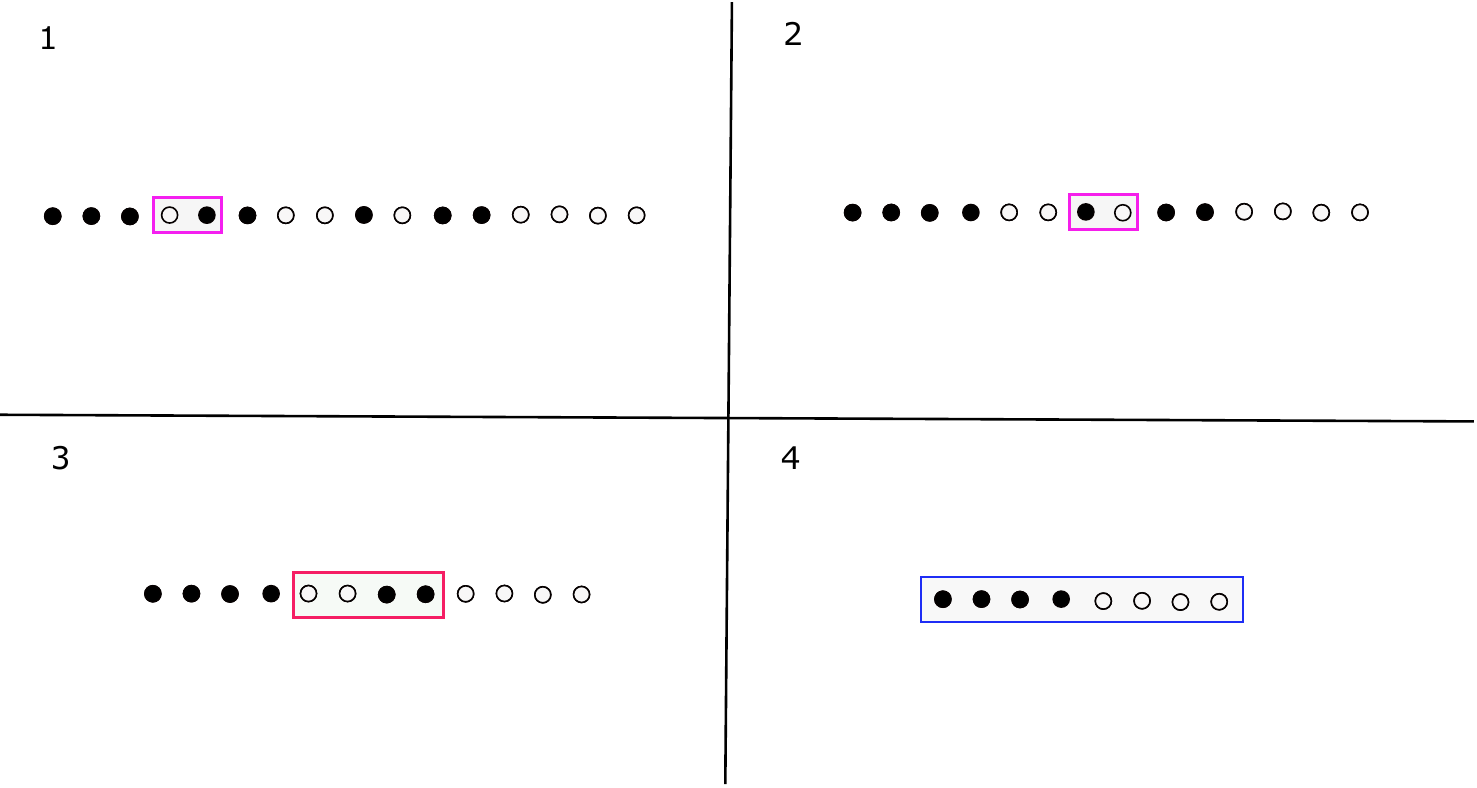}
	
	\captionof{figure}{The Takahashi-Satsuma algorithm applied to the finite configuration in quadrant 1 which is indeed an excursion. Identified solitons are surrounded by rectangles of different colors (violet for $1$-solitons, red for $2$-solitons, blue for $4$-solitons). The algorithm stops after 4 iterations. \label{TS-algo}	 }
\end{center}\end{minipage}	
	
 \noindent\begin{minipage}{\linewidth}\begin{center}	
	
	\includegraphics{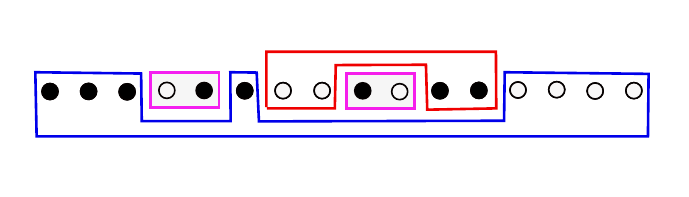}
	
	\captionof{figure}{The final decomposition into solitons of the configuration of Fig.~\ref{TS-algo}. Balls and boxes belonging to the same soliton are surrounded by colored lines. The lines are violet for $1$-solitons, red for $2$-solitons, blue for $4$-solitons. }
	\label{soli-deco}	
      \end{center}\end{minipage}

For a $k$-soliton $\gamma$ we call \emph{support} of $\gamma$, denoted by $\{\gamma\}\subset\Z$, the union of two sets of boxes: the \emph{head} $\{h_0(\gamma),\dots,h_{k-1}(\gamma)\}$ and the \emph{tail} $\{t_0(\gamma),\dots,t_{k-1}(\gamma)\}$, satisfying $\eta(h_i)=1$ and $\eta(t_i)=0$ and $h_i(\gamma)<h_{i+1}(\gamma)$, $t_i(\gamma)<t_{i+1}(\gamma)$ for $i=0,\dots,k-2$. Either $h_i(\gamma)<t_j(\gamma)$ for all $i,j$ or  $t_j(\gamma)<h_i(\gamma)$ for all $i,j$. We denote by $\Gamma_k\eta$ the set of $k$-solitons of $\eta$. When $\eta$ has infinitely many records to the right and left of the origin, every box in $\Z$ is either a record or belongs to $\{\gamma\}$ for some $k$-soliton $\gamma$, for some $k\ge 1$.

An example of the application of this algorithm is illustrated in Fig.~\ref{TS-algo}. The final decomposition into solitons is illustrated in Fig.~\ref{soli-deco}

\paragraph{Slots} Given an excursion $\vep$, a box $z$ is a \emph{$k$-slot} if either $z$ is Record 0 $z=0$ or $z \in\{h_i(\gamma), t_i(\gamma)\}$ for some $i\ge k$, some  $\gamma\in\Gamma_m\eta$ for some $m>k$. Let $S_k\eta$ be the set of $k$-slots of $\eta$. We have $S_{k+1}\eta\subseteq S_k\eta$.

Enumerate the $k$-slots setting  $\tts_k(\eta,0):=0$, that is,  $k$-slot 0 is at record 0 for all $k$, and
\begin{align}
\hbox{$\tts_k(\eta,j):=$ position of the $j$-th $k$-slot, counting from $k$-slot 0}.
\end{align}
We show in Figures \ref{slot-paper} and \ref{slot-paper2} an example of identifications of the slots using the sample configuration $\eta$ of Fig.~\ref{TS-algo}.

	
	 \noindent\begin{minipage}{\linewidth}\begin{center}
	\includegraphics{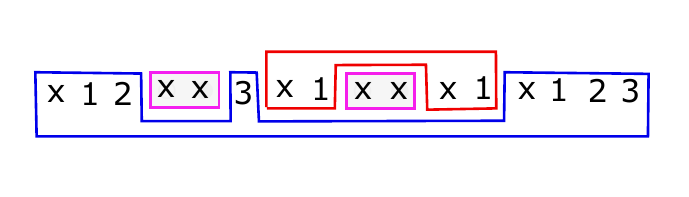}
	
	\captionof{figure}{Slots associated to our sample configuration of Fig.~\ref{TS-algo}. To each box we associate the number of the maximal slot. The symbol $\times$ means that the box is not a slot for any $k\geq1$. A box with number $m$ is a $k$-slot for each $k\leq m$. 	\label{slot-paper}}
	
\end{center}\end{minipage} 


	 \noindent\begin{minipage}{\linewidth}\begin{center}

\includegraphics{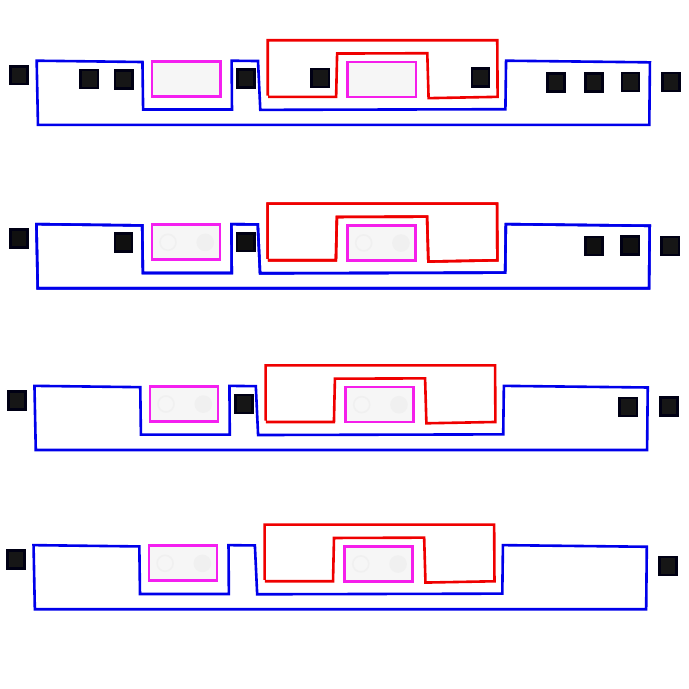}

\captionof{figure}{From the top to the bottom we represents respectively the sets $S_1\eta$, $S_2\eta$, $S_3\eta$ and $S_4\eta$. The configuration $\eta$ is our sample configuration of Fig.~\ref{TS-algo}. Boxes belonging to the sets are marked by a $\blacksquare$. We marked also the slots associated to the records on the left and on the right of the finite configuration (which is indeed an excursion). The origin is the leftmost black square. Numbers to the slots are given starting counting from this slot. \label{slot-paper2}}
	
\end{center}\end{minipage} 

We get all slots together in Fig.~\ref{slot-enumerated-3}.

\noindent\begin{minipage}{\linewidth}\begin{center}

\includegraphics{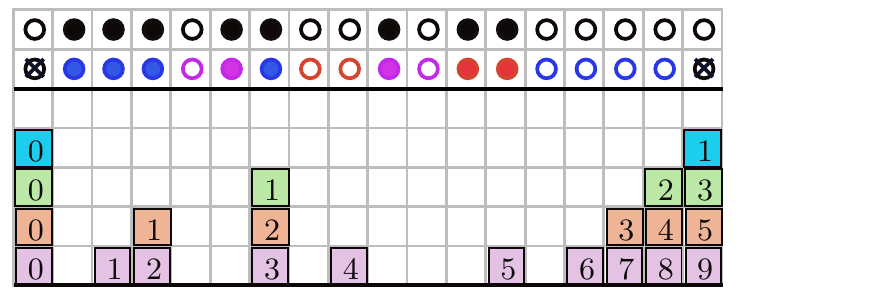}

\captionof{figure}{Slot enumeration. The configuration $\eta$ is our sample configuration of Fig.~\ref{TS-algo}, an excursion between two records. In the second line the solitons have been identified and colored: a blue $4$-soliton, a red 2-soliton and two purple 1-solitons. Below, colored square boxes identify $k$-slots for $k=1,2,3,4$. Records are $k$-slots for all $k$ but we have only depicted until $k=4$ as there are no solitons bigger than 4 and hence no slots bigger than 4 besides the records. There is a 4-soliton appendend at 4-slot 0, a 2-soliton appended at 2-slot 2 and two 1-solitons appended at 1-slots 2 and 4, respectively. The slots associated to the record located to the right of the excursion do not strictly belong to the excursion; they are still depicted because they are needed to identify the solitons appended to the slots to the left of it.  \label{slot-enumerated-3}}
	
\end{center}\end{minipage}

\paragraph{Soliton decomposition of ball configurations \cite{FNRW}}   We say that a $k$-soliton $\gamma$ is \emph{appended} to $k$-slot $j$ of $\eta$ if its support is strictly included in the open integer interval with extremes in the $k$-slots $j$ and $j+1$:
\begin{align}
\{\gamma\}\subset \big( \tts_k(\eta,j), \tts_k(\eta,j+1)\big).\label{appended1}
\end{align}
Any finite number of $k$-solitons may be appended to a single $k$-slot.
Define
\begin{align}
  \label{appended}
\zeta_k(j):= \#\{\gamma\in\Gamma_k\eta: \gamma\hbox{ is appended to $k$-slot }j\}.
\end{align}

Consider the example of Fig.~\ref{slot-paper2}. Starting from the bottom we have that the blue $4$-soliton is between $\tts_4(\eta,0)$ and $\tts_4(\eta,1)$ so that it is appended to the $4$-slot number $0$ and $\zeta_4(0)=1$; the red $2$-soliton is between $\tts_2(\eta,2)$ and $\tts_2(\eta,3)$ so that it is appended to the $2$-slot number $2$ and $\zeta_2(2)=1$; the violet $2$-solitons are respectively between $\tts_1(\eta,2)$ and $\tts_1(\eta,3)$ and $\tts_1(\eta,4)$ and $\tts_1(\eta,5)$ so that the leftmost 1-soliton is appended to the $1$-slot number $2$ while the rightmost 1-soliton is appended to the $1$-slot number $4$ and therefore we have $\zeta_1(2)=1$ and $\zeta_1(4)=1$. All the remaining $\zeta$'s are identically zero.
See also Fig.~\ref{slot-enumerated-3}.

\paragraph{Slot diagrams} A slot diagram is a combinatorial object which encodes the components of a single excursion.

We start giving a formal definition. A \emph{Slot Diagram} is a family
$x= (x_k)_{k\ge1}$ of vectors $x_k=(x_k(0),\dots,x_k({s_k-1}))$ with $s_k\in\N$ and $x_k(j)\in\Z_{\ge 0}$, satisfying the following conditions:
denoting by $|x_k|:=  x_k(0)+\dots+x_k({s_k-1})$, we have
\begin{align}
1)&\qquad M(x):= \max\{k: x_k(0)>0\} <\infty\,, \label{sk0}\\
2)& \qquad s_\ell=1, \hbox{ for } \ell\ge M(x) \hbox{ and } x_\ell(0)=0 \hbox{ for } \ell> M(x)\,,\label{sk}\\
3)& \qquad s_k= 1+ \sum_{\ell>k} 2(\ell-k)|x_\ell|\,. \label{sk2}
\end{align}
The complete structure of a slot diagram is determined by the finite collection of vectors $(x_k)_{1\le k\leq M}$ but for notational convenience we consider also the indices $k>M=M(x)$. An example of a slot diagram is the following
\begin{align}\label{slotex2}
& k \qquad \mapsto \qquad x_k\nonumber \\
& 4 \qquad \mapsto \qquad (1)\nonumber \\
& 3 \qquad \mapsto \qquad (0,0,0) \\
& 2 \qquad \mapsto \qquad (0,0,1,0,0)\nonumber \\
& 1 \qquad \mapsto \qquad (0,0,1,0,1,0,0,0,0)\nonumber
\end{align}
In this case we have $M=4$, $s_4=1$, $s_3=3$, $s_2=5$ and $s_1=9$. For any $k>4$ we have $s_k=1$ and $x_k(0)=0$ and therefore the slot diagram is completely determined by the finite diagram \eqref{slotex2}.

Let $\cS$ be the set of slot diagrams. We have that $\cE$ is in bijection with $\cS$ so that a slot diagram completely codifies an excursion. We now construct the map $\vep\mapsto x[\vep]$ and its inverse $x\mapsto\vep[x]$ (see \cite{FNRW} and \cite{FG} for more details).

\paragraph{Construction of $x[\vep]$} Consider an excursion $\vep$. If the excursion is empty then the slot diagram is defined as $s_k\equiv 1$ and $x_k(0)\equiv 0$. If $\vep$ is not empty, then let $M$ be the maximal soliton size in $\vep$ and define $s_\ell=1$ for $\ell\ge M$, $x_\ell(0)=0$ for $\ell>M$ and set $x_M(0)=$ number of $M$-solitons in the excursion. Assume we have set $x_{k+1},\dots, x_M$. Use \eqref{sk2} to define the number of $k$-slots $s_k$ and set $x_k(j)=$ number of $k$-solitons appended to $k$-slot $j$ in the excursion. Iterate for $k=M-1,\dots,1$.

In short, considering the excursion $\vep$ as an infinite walk we have that $s_k-1$ is the number of $k$-slots of the excursion which are not records and $x_k(j)$ is the number of $k$-solitons appended to the $k$-slot number $j$.
For example \eqref{slotex2} is the slot diagram associated to the excursion corresponding to the ball configuration in Fig.~\ref{TS-algo} and Fig.~\ref{slot-enumerated-3}.

\paragraph{Construction of $\vep[x]$} Given a configuration $\eta$ with no $\ell$-solitons for $\ell<k$, define $I_{k,j}$ the operator that insert a $k$-soliton at $k$-slot $j$ of $\eta$, as follows. Denote by $u=\tts_k(\eta,j)$ the position of $k$-slot $j$ in $\eta$ and
\begin{align}
I_{k,j}\eta(z) =
\begin{cases}
\eta(z) &\hbox{if } z\le u\\
1-\eta(u)&\hbox{if } u<z\le u+k\\
\eta(u)&\hbox{if } u+k<z\le u+2k\\
\eta(z-2k)&\hbox{if } u+2k<z.
\end{cases}
\end{align}
Denote by $I_{k,j}^n$ the $n$-th iteration of $I_{k,j}$, which corresponds to insert $n$ $k$-solitons one after the other on the same slot $j$. When $n=0$ we just have the identity, meaning that no $k$-soliton is inserted at slot $j$.

Denoting $M:=M(x)$, define
\begin{align}
\eta_\ell &\equiv 0 \quad\hbox{for } \ell>M, \quad \hbox{and iteratively,}
\nn\\
\eta_k &:= I_{k,0}^{x_k(0)}\dots I_{k,s_k-1}^{x_k(s_k-1)} \eta_{k+1} , \quad\hbox{for } k=M,\dots,1.\label{d88} \\
\vep[x] &:= W\eta_1.\nn
\end{align}
Observe that the number $n_k$ of $k$-solitons in the excursion $\vep[x]$ coincides with the sum over $j$ of $x_k(j)$:
\begin{align}
\label{nkxk}
n_k(\vep[x]) = \sum_{j=0}^{s_k-1}x_k(j) =|x_k|.
\end{align}
\emph{Example}. Consider the following slot diagram $x$:
\begin{align}
\label{slotex}
x_\ell&= (0), \quad\hbox{for }\ell>3 \nonumber\\
x_3  &= (2)\nonumber\\
x_2 &= (0,0,1,0,0)\\
x_1 &= (3,0,4,1,0,0,0,0,2,0,1)\nonumber
\end{align}
that is, $M=3$, $s_k=1$ for $k\geq 3$, $s_2=5$ and $s_1=11$.

In this example the algorithm works as follows. Active $k$-slots are red and $k$-solitons being appended at each step are blue.

{\tt \red{0}}\qquad\qquad\qquad (record 0 = \red{$k$-slot} 0 for all $k$)\\
{  \tt\red{0}\blue{111000111000}}\qquad\qquad (attach 2 \blue{3-soliton} to \red{3-slot} 0)  $I^2_{3,0}$\\
{ \tt \red{0}11\red{1}00\red{0}\blue{1100}11\red{1}00\red{0}  }\qquad\qquad (attach 1 \blue{2-soliton} to \red{2-slot} 2) $I^1_{2,2}$\\
{ \tt \red{0}\blue{101010}1\red{11}0\red{00}1\red{1}0\red{0}1\red{11}0\red{00} }\qquad\qquad (attach 3 \blue{1-soliton} to \red{1-slot} 0) $I^3_{1,0}$\\
{ \tt \red{0}1010101\red{11}\blue{01010101}0\red{00}1\red{1}0\red{0}1\red{11}0\red{00} }\qquad\qquad (attach 4 1-soliton to 1-slot  2) $I^4_{1,2}$\\
{ \tt \red{0}1010101\red{11}010101010\red{0}\blue{10}\red{0}1\red{1}0\red{0}1\red{11}0\red{00}}\qquad\qquad (attach 1 1-soliton to 1-slot 3)  $I^1_{1,3}$\\
{ \tt \red{0}1010101\red{11}010101010\red{0}{10}\red{0}1\red{1}0\red{0}1\red{11}\blue{0101}0\red{00}}\qquad(attach 2 1-solitons to 1-slot 8)  $I^2_{1,8}$\\
{ \tt \red{0}1010101\red{11}010101010\red{0}{10}\red{0}1\red{1}0\red{0}1\red{11}{0101}0\red{00}\blue{10}}\qquad(attach 1 1-soliton to 1-slot 10) $I^1_{1,10}$

The resulting excursion is given by
\begin{align}
\vep[x] &= W\left(I^1_{1,10}I^2_{1,8}I^1_{1,3}I^4_{1,2}I^3_{1,0}I^1_{2,2}I^2_{3,0}\eta_4\right) \nn\\
&= W\left(\hbox{\tt...\blue{101010}\red{111}\blue{01010101}\red{0}\blue{10}\red{00}{\color{green}1100}\red{111}\blue{0101}\red{000}\blue{10}...}\right)
\nn
\end{align}
where the dots represent records and we have painted blue, green and red the 1-, 2- and 3-solitons, respectively.
Record 0 is the dot preceding the leftmost 1 and record 1 is the dot following the rightmost 0. Here we start with the empty excursion $\eta_4$ because $M=3$.

\section{Random excursions}
\label{RE}

We introduce two natural families of probability measures on the set of excursions $\mathcal E$ depending on two collections of parameters $\alpha$ and $q$. The main result of this section is that the two families coincide with a non trivial relationship between the parameters. 

For $p\in(0,1]$ we say that a random variable $Y$ is Geometric$(p)$ when
\begin{align}
\label{geomm}
P(Y=j)= p (1-p)^j,\;j\ge 0;\quad EY= \frac {1-p}{p}.
\end{align}
with the convention $0^0=1$.

\subsection{Probability measures on excursions}
\label{ism}

\paragraph{First family}
For each excursion $\vep\in\cE$ define
\begin{align}
n_k(\vep) := \hbox{number of $k$-solitons in $\vep$},
\end{align}
where this number is given by the Takahashi-Satsuma algorithm in \S2 applied to $\vep$.

Let $\alpha = (\alpha_k)_{k\ge 1}$ be a family of parameters with $\alpha_k\in[0,1)$, define
\begin{align}
\label{aaa}
\Zsola:=\textstyle{\sum_{\vep\in\cE} \prod_{k\ge1} \alpha_k^{n_k(\vep)}}
\end{align}
and call
\begin{equation}\label{aaaa}
\mathcal A:=\left\{\alpha \,:\, Z_\alpha <+\infty \right\}\,.
\end{equation}
For $\alpha\in \mathcal A$ define the measure $\nusola$ on $\cE$ by
\begin{align}
\label{nusola}
\nusola(\vep) := \frac1{\Zsola}\textstyle{\prod_{k\ge1} \alpha_k^{n_k(\vep)}}\,,
\end{align}
here again we use the convention $0^0=1$ so that if $\alpha_k=0$ then the measure $\nusola$ gives full measure to excursions without $k$-solitons.
Note that by \eqref{nkxk} we can write \eqref{nusola} in terms of the slot diagram of $\vep$ by
\begin{align}
\label{nusolax}
\nusola(\vep) = \frac1{\Zsola}\textstyle{\prod_{k\ge1} \alpha_k^{|x_k[\vep]|}}\,.
\end{align}
We denote the mean number of $k$-solitons per excursion by
\begin{align}\label{ee46}
\rho_k(\alpha) := \sum_{\vep\in\cE} n_k(\vep)\, \nu_\alpha(\vep),
\end{align}
and therefore the mean excursion size is
\begin{align}
\label{ee45}
\sum_{k\ge1}2k\,\rho_k(\alpha)=\frac1{\Zsola}\sum_{\vep\in\cE}\left[\Bigl(\sum_{j\ge1} 2j\,n_j(\vep)\Bigr) \prod_{k\ge1} \alpha_k^{n_k(\vep)}\right]\,.
\end{align}
We call $\cA^+$ the set of $\alpha$ such that the mean excursion size under $\nu_\alpha$ is finite: 
\begin{equation}
\mathcal A^+:=\left\{\alpha\,:\, \textstyle{\sum_{k\ge1}2k\rho_k(\alpha)}<+\infty \right\}\,.\label{a+}
\end{equation}
By definition we have $\mathcal A^+\subseteq \mathcal A$.

\paragraph{Second family} 
Let $q=(q_k)_{k\geq 1}$ be a family of parameters with $q_k\in [0,1)$ and introduce the sets
\begin{align}\label{sumq}
\mathcal Q&:=\left\{q\,:\, \textstyle{\sum_{k\ge1}}\,q_k<+\infty\right\}\,,\\
\mathcal Q^+&:=\left\{q\,:\, \textstyle{\sum_{k\ge1}}\,kq_k<+\infty\right\}\,.\label{sumq+}
\end{align}
For $q\in \mathcal Q$ consider the probability measure $\varphi_q$ on $\mathcal E$ defined by
\begin{align}
\label{ss24}
\varphi_q(\vep) := \textstyle{\prod_{k\ge 1}}\,q_k^{|x_k[\vep]|} (1-q_k)^{s_k(x[\vep])}.
\end{align}
The fact that \eqref{ss24} is a probability measure on $\mathcal E$ when $q\in \mathcal Q$ is a consequence of the following argument.
Writing $x=x[\vep]$ and denoting
$x_k^\infty =(x_k, x_{k+1}, \dots)$,
formula \eqref{ss24} is equivalent to the following three formulas (with the convention $q_0=1$ to take care of the empty excursion), which give a recipe to construct/simulate the random slot diagram of an excursion with distribution \eqref{ss24}
\begin{align}
\varphi_q\left(M(x)=m\right)&
= q_m\,\prod_{\ell>m}\big(1-q_\ell),    \quad m\ge 0,                        
\label{ss36}\\[2mm]
\varphi_q\big(x_m(0)\big|M(x)=m\big)&= q_m^{|x_m(0)|-1} (1-q_m),  \label{ss37}\\[2mm]
\varphi_q\big(x_k\big|x_{k+1}^{\infty}\big)&= q_k^{|x_k|} (1-q_k)^{s_k(x)}, \label{ss38}
\end{align}
where we abuse notation writing $x_m$ as ``the set of slot diagrams $y$ such that $y_m=x_m$'', and so on.
Then, to construct a slot diagram with law $\varphi_q$, first choose a maximal soliton-size $m$ with probability \eqref{ss36}. This is a probability on $\mathbb Z_{\geq 0}$ since $q\in \mathcal Q$. Then use \eqref{ss37} to determine the number of maximal solitons $x_m(0)$ (a Geometric$(1-q_m)$ random variable conditioned to be strictly positive). Finally we use \eqref{ss38} to construct iteratively the lower components. Under  the measure $\varphi_q$ and conditioned on $x_{k+1}^{\infty}$, the variables $\left(x_k(0), \dots ,x_k(s_k-1)\right)$ are i.i.d.~Geometric$(1-\q_k)$.

\subsection{Equivalence of measures}

Given the parameters $\alpha$ and $q$ we define the transformation $q=q(\alpha)$ by
\begin{align}
q_1&:=\alpha_1\quad \text{ and }\quad
q_k:=\frac{\alpha_k}{\prod_{j=1}^{k-1}(1-\q_j)^{2(k-j)}}\,,\text{ for }k\ge 2 , \quad \label{ppc33}
\end{align}
and $\alpha=\alpha(q)$ by
\begin{equation}
\alpha_k:=\q_k\prod_{\ell=1}^{k-1}(1-\q_\ell)^{2(k-\ell)}\,, \quad \text{ for }k\ge 1.\label{ppc3d}\\
\end{equation}

\begin{theorem}[Equivalence of measures]\label{teonuovo}
  Let $\alpha$ and $q$ be related by \eqref{ppc33}-\eqref{ppc3d}. Then
  \begin{align}   
    \alpha\in \mathcal A &\text{ if and only if } q\in \mathcal Q , \label{pp45}\\
    \alpha\in \mathcal A^+ &\text{ if and only if } q\in \mathcal Q^+. \label{pp46}
  \end{align}
In particular, the transformations \eqref{ppc33}-\eqref{ppc3d} are one the inverse of the other and map bijectively $\mathcal A \leftrightarrow \mathcal Q$ and $\mathcal A^+ \leftrightarrow \mathcal Q^+$.
Furthermore, if $\alpha\in\mathcal A$, we have 
  \begin{align}
    \nu_{\alpha}=\varphi_q, \label{pp47}
  \end{align}
  defined in \eqref{nusola} and \eqref{ss24}.
\end{theorem}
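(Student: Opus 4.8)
\emph{Overall strategy.} The plan is to reduce the whole statement to one algebraic identity between the excursion weights of $\varphi_q$ and $\nu_\alpha$, and then to evaluate the partition function $Z_\alpha$ by summing out the components of a slot diagram one level at a time. Before anything else I would record the elementary fact that \eqref{ppc33} and \eqref{ppc3d} are mutually inverse on $[0,1)^{\mathbb N}$: the map \eqref{ppc3d} is manifestly well defined and sends $[0,1)^{\mathbb N}$ into itself, since each factor $(1-q_\ell)\in(0,1]$ gives $0\le\alpha_k\le q_k<1$; substituting \eqref{ppc3d} into \eqref{ppc33} returns $q$ by a one-line telescoping, and conversely the recursion \eqref{ppc33} inverts \eqref{ppc3d} as long as it stays in $[0,1)$.

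\emph{Step 1: the weight identity.} For a slot diagram $x=x[\vep]$ I would insert the slot constraint \eqref{sk2}, $s_k=1+\sum_{\ell>k}2(\ell-k)|x_\ell|$, into the definition \eqref{ss24}. Pulling the factor $\prod_k(1-q_k)^{1}$ out of $\prod_k(1-q_k)^{s_k}$ and reindexing the double product by the level $\ell$ carrying $|x_\ell|$ gives
\begin{align}
\varphi_q(\vep)&=\Big(\prod_{k\ge1}(1-q_k)\Big)\prod_{\ell\ge1}\Big[q_\ell\prod_{j=1}^{\ell-1}(1-q_j)^{2(\ell-j)}\Big]^{|x_\ell|}\nonumber\\
&=\Big(\prod_{k\ge1}(1-q_k)\Big)\prod_{\ell\ge1}\alpha_\ell^{\,n_\ell(\vep)},\nonumber
\end{align}
where the bracket is exactly $\alpha_\ell$ by \eqref{ppc3d} and $n_\ell=|x_\ell|$ by \eqref{nkxk}. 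This is a pure identity of nonnegative reals, meaningful as soon as $\prod_k(1-q_k)$ converges, i.e.\ for $q\in\mathcal Q$.

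\emph{Step 2: the product formula, giving \eqref{pp45} and \eqref{pp47}.} Since each component $x_k$ ranges freely over $\mathbb Z_{\ge0}^{s_k}$ while $s_k$ depends only on the \emph{higher} components $x_{k+1},x_{k+2},\dots$ (by \eqref{sk2}), I can sum $Z_\alpha=\sum_\vep\prod_k\alpha_k^{|x_k|}$ from the bottom up. Summing out $x_1$ over $\mathbb Z_{\ge0}^{s_1}$ yields the geometric factor $(1-\alpha_1)^{-s_1}$; expanding $s_1$ via \eqref{sk2} and absorbing the exponents of $(1-\alpha_1)$ into the remaining weights replaces $\alpha_\ell$ by $\alpha_\ell/(1-\alpha_1)^{2(\ell-1)}$ for $\ell\ge2$ and leaves a prefactor $(1-\alpha_1)^{-1}=(1-q_1)^{-1}$. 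The new level-$2$ parameter is precisely $q_2$ of \eqref{ppc33}, and an induction shows that after removing levels $1,\dots,r$ the effective parameter at level $r+1$ equals $q_{r+1}$, with prefactor $\prod_{k\le r}(1-q_k)^{-1}$. All terms being nonnegative, the rearrangements are justified by Tonelli, and iterating (the residual level-$>r$ sum decreasing to $1$) yields
\begin{align}
Z_\alpha=\prod_{k\ge1}\frac{1}{1-q_k}\qquad\text{in }(0,\infty].\nonumber
\end{align}
The same summation shows that if the recursion \eqref{ppc33} first reaches $q_{k_0}\ge1$, then already the level-$k_0$ geometric sum (with $s_{k_0}\ge1$) diverges, so $Z_\alpha=\infty$; hence $\alpha\in\mathcal A$ forces all $q_k<1$ and the product formula applies, giving $Z_\alpha<\infty\iff\prod_k(1-q_k)>0\iff\sum_kq_k<\infty$, which is \eqref{pp45} together with the bijections $\mathcal A\leftrightarrow\mathcal Q$. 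Then \eqref{pp47} is immediate: for $\alpha\in\mathcal A$, Step 1 combined with $Z_\alpha=\prod_k(1-q_k)^{-1}$ gives $\varphi_q(\vep)=Z_\alpha^{-1}\prod_\ell\alpha_\ell^{n_\ell(\vep)}=\nu_\alpha(\vep)$.

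\emph{Step 3 and the main obstacle: \eqref{pp46}.} Writing $m_k=q_k/(1-q_k)$, the conditional i.i.d.\ structure \eqref{ss38} gives $\rho_k=m_k\,E_{\varphi_q}[s_k]\ge m_k\ge q_k$ (as $s_k\ge1$), so $\sum_k k\rho_k\ge\sum_k kq_k$ and the mean size \eqref{ee45} being finite forces $q\in\mathcal Q^+$; this settles $\mathcal A^+\Rightarrow\mathcal Q^+$ at once. The converse is where the real work lies. Here I would compute the mean half-length $E[n]=\sum_kk\rho_k$ (with $n=\sum_kkn_k$) through a \emph{forward} recursion: differentiating the tilted partition function $Z_{(s^k\alpha_k)_k}=\prod_k(1-q_k(s))^{-1}$ at $s=1$ (legitimate for $s\le1$ by Step 2) gives $E[n]=\sum_km_ku_k$ with
\begin{align}
u_k=k+2\sum_{j<k}(k-j)\,m_j u_j\,,\nonumber
\end{align}
so that each $u_k$ is finite and explicitly computable. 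Summing and interchanging the two finite sums bounds the partial sums by $\sum_{k\le N}m_ku_k\le\sum_k km_k+2\sum_{j<N}m_ju_j\,\tilde h_j$, where $\tilde h_j:=\sum_{k>j}(k-j)m_k$. The main obstacle is exactly this last term: the crude bound $\tilde h_j\le\sum_k km_k$ only closes the estimate when $\sum_k km_k$ is small and fails for large parameters. The resolution I would use is that $q\in\mathcal Q^+$ makes $\sum_k km_k<\infty$ (since $q_k\to0$ forces $m_k\le2q_k$ eventually), so $\tilde h_j\to0$; choosing $J_0$ with $\tilde h_j\le\tfrac14$ for $j\ge J_0$ splits the bound into a finite head (each $m_j,u_j<\infty$) plus $\tfrac12\sum_{k\le N}m_ku_k$, which is absorbed on the left. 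This yields a bound on the partial sums uniform in $N$, hence $E[n]<\infty$ and $\alpha\in\mathcal A^+$, completing \eqref{pp46}.
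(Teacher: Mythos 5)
Your proposal is correct and, for \eqref{pp45} and \eqref{pp47}, follows essentially the paper's route: your Step 1 is exactly the computation \eqref{ppc}--\eqref{ppc2} in the paper (redistributing $(1-q_k)^{s_k}$ via the slot constraint \eqref{sk2}), and your bottom-up summation in Step 2 is the paper's $\theta$-recursion (Lemmas \ref{A-D}--\ref{51a}) leading to $Z_\alpha=\prod_k(1-q_k)^{-1}$, i.e.\ \eqref{alt}; you merely use the weight identity in both directions instead of separately verifying the conditional laws \eqref{ss36a}--\eqref{ss38a}. Where you genuinely diverge is the implication $q\in\mathcal Q^+\Rightarrow\alpha\in\mathcal A^+$ in \eqref{pp46}: the paper takes expectations in \eqref{sk2} to get the \emph{backward} recursion \eqref{rer} for $\beta_k=E_{\varphi_q}(s_k)$ and then cites Section~3.3 of FNRW for the existence of a finite solution when $\sum_k km_k<\infty$, whereas you derive a \emph{forward} recursion $u_k=k+2\sum_{j<k}(k-j)m_ju_j$ (so each $u_k$ is finite by construction) and close the estimate $\sum_k m_ku_k<\infty$ yourself by splitting at an index where the tails $\tilde h_j=\sum_{k>j}(k-j)m_k$ are small and absorbing. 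This buys you a self-contained proof of \eqref{pp46}, at the price of one step you should not gloss over: the identity $E[n]=\sum_k m_ku_k$ is obtained by differentiating $\log Z_{(s^k\alpha_k)_k}=-\sum_k\log(1-q_k(s))$ term by term at $s=1^-$, and Fatou alone gives only $E[n]\ge\sum_k m_ku_k$, which is the useless direction here; you need the reverse inequality, which requires e.g.\ checking that each $q_k(s)$ (hence each $-\log(1-q_k(s))$) is convex and increasing in $s$ so that the difference quotients converge monotonically, or else deriving the forward recursion directly by a Wald/branching argument (a $k$-soliton contributes $k$ to the half-length and opens $2(k-j)$ new $j$-slots for each $j<k$). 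With that point made precise, the argument is complete.
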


The remaining of this subsection is devoted to the proof of Theorem \ref{teonuovo}. We start with some notation and preliminary results.
In the next three lemmas we compute the partition
function $Z_\alpha$.

Given a slot diagram $x$ we
define the \emph{translation} $\tau$ by
\begin{align}
\big(\tau x\big)_k=x_{k+1}, \qquad k=1,2,\dots.\nn
\end{align}
We have that $\tau x$ is again a slot diagram.
For $\alpha=\left(\alpha_k\right)_{k\in \mathbb N}\in \mathcal A$
we define another ``translation''  operator $\theta$  by
\begin{equation}\label{teta}
\big(\theta \alpha\big)_k:=\frac{\alpha_{k+1}}{\left(1-\alpha_1\right)^{2k}}\,, \qquad k=1,2,\dots\,,
\end{equation}
so that we can write \eqref{ppc33} as
\begin{align}
q_k= \left(\theta^{k-1}\alpha\right)_1,\quad k\ge1, \label{ppc35}
\end{align}
with the convention $\theta^0\alpha =\alpha$.
We define and compute some restricted partition functions. We call $Z_\alpha({x_k^{\infty}})$ the sum of the weights $\prod_{k\geq 1}\alpha_k^{n_k}$ over all the excursions $\vep$ such that $x_k^\infty[\vep]=x_k^\infty$.  We have 
\begin{equation}
\label{zainfi}
Z_\alpha({x_k^{\infty}}):=\prod_{n\ge k}\alpha_n^{|x_{n}|}
\sum_{\{y:y_k^{\infty}=x_k^{\infty}\}}
\prod_{\ell=1}^{k-1}\alpha_\ell^{|y_{\ell}|},\qquad Z_\alpha(x)=\prod_{n\ge 1}\alpha_n^{|x_{n}|}\,,
\end{equation}
where we sum the weights of  the slot diagrams $y$ which are compatible with $x_k^{\infty}$. These partition functions satisfy a useful recurrence:
\begin{lemma}[Iterating tail partition functions]\label{A-D}
	We have
	\begin{equation}\label{recrel}
	Z_\alpha({x_k^{\infty}})=\frac{Z_{\theta\alpha}({\left(\tau x\right)_{k-1}^{\infty}})}{(1-\alpha_1)}\,, \qquad k>1\,.
	\end{equation}
\end{lemma}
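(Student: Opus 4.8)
The plan is to prove \eqref{recrel} by ``peeling off'' the lowest ($1$-)component of the slot diagram and summing the resulting geometric series over the $1$-solitons; this single geometric summation is precisely what turns the parameter family $\alpha$ into $\theta\alpha$ and shifts the frozen tail from $x_k^{\infty}$ down to $(\tau x)_{k-1}^{\infty}$. Concretely, I would start from the definition \eqref{zainfi}, in which the prefactor $\prod_{n\ge k}\alpha_n^{|x_n|}$ records the frozen tail and the sum runs over the free lower components $y_1,\dots,y_{k-1}$ of the slot diagrams $y$ with $y_k^{\infty}=x_k^{\infty}$.

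The key structural observation is that, by \eqref{sk2}, the number of $1$-slots $s_1=1+\sum_{\ell\ge 2}2(\ell-1)|y_\ell|$ depends only on $|y_2|,|y_3|,\dots$ and not on $y_1$ itself. Hence, for fixed values of the remaining components, summing $\alpha_1^{|y_1|}$ over all $y_1\in\Z_{\ge 0}^{\,s_1}$ yields the geometric factor $(1-\alpha_1)^{-s_1}$. I would then substitute the formula for $s_1$ (with $y_\ell=x_\ell$ for $\ell\ge k$) and distribute: the leading $1$ produces the global factor $(1-\alpha_1)^{-1}$, while each term $2(\ell-1)|y_\ell|$ combines with $\alpha_\ell^{|y_\ell|}$ for $2\le\ell\le k-1$, respectively with $\alpha_\ell^{|x_\ell|}$ for $\ell\ge k$, to give $\big[\alpha_\ell(1-\alpha_1)^{-2(\ell-1)}\big]^{|y_\ell|}$, respectively $\big[\alpha_\ell(1-\alpha_1)^{-2(\ell-1)}\big]^{|x_\ell|}$. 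By the definition \eqref{teta} of $\theta$, one has $\alpha_\ell(1-\alpha_1)^{-2(\ell-1)}=(\theta\alpha)_{\ell-1}$, so every base becomes a component of $\theta\alpha$ with its index shifted down by one.

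Finally I would make this shift explicit through the change of variables $z=\tau y$, i.e.\ $z_m=y_{m+1}$. Since $\tau$ maps slot diagrams to slot diagrams with $s_m(\tau y)=s_{m+1}(y)$, the constraint $y_k^{\infty}=x_k^{\infty}$ becomes $z_{k-1}^{\infty}=(\tau x)_{k-1}^{\infty}$, and the frozen-tail prefactor $\prod_{n\ge k}\alpha_n^{|x_n|}$ becomes $\prod_{m\ge k-1}(\theta\alpha)_m^{|(\tau x)_m|}$ after setting $m=n-1$. Comparing the resulting expression with \eqref{zainfi} written for the parameters $\theta\alpha$ and tail $(\tau x)_{k-1}^{\infty}$ gives exactly $Z_\alpha(x_k^{\infty})=(1-\alpha_1)^{-1}\,Z_{\theta\alpha}\big((\tau x)_{k-1}^{\infty}\big)$. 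All terms are nonnegative, so the interchange of summations is justified by Tonelli and the identity holds in $[0,\infty]$; since $\alpha_1\in[0,1)$, the geometric sum always converges to $(1-\alpha_1)^{-1}$.

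The main obstacle I anticipate is purely bookkeeping: keeping straight which factors $(1-\alpha_1)^{-2(\ell-1)}$ attach to the summed components $2\le\ell\le k-1$ versus those that merge with the frozen tail $\ell\ge k$, and verifying that the reindexing $m=\ell-1$ correctly transforms both the summand and the prefactor into their $\theta\alpha$ counterparts. Once the independence of $s_1$ from $y_1$ is isolated and the geometric sum performed, the remaining manipulation is forced.
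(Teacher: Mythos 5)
Your proposal is correct and follows essentially the same route as the paper's proof: isolate the innermost sum over $y_1$ (using that $s_1$ depends only on the higher components), evaluate it as $(1-\alpha_1)^{-s_1}$, substitute \eqref{sk2} for $s_1$, distribute the resulting powers of $(1-\alpha_1)^{-1}$ onto the bases $\alpha_\ell$ to produce $(\theta\alpha)_{\ell-1}$, and re-index via $\tau$. The only difference is presentational (your explicit appeal to Tonelli and to $s_m(\tau y)=s_{m+1}(y)$, which the paper leaves implicit).
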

\begin{proof}
	From \eqref{zainfi} we have
	\begin{equation}\label{rec-a}
	Z_\alpha({x_k^{\infty}})=\prod_{i=k}^{\infty}\alpha_i^{|x_{i}|}
	\sum_{\left\{y_2^{\infty}:y_k^{\infty}=x_k^{\infty}\right\}}\prod_{j=2}^{k-1}\alpha_j^{|y_{j}|}
	\sum_{y_{1}\in \Z_{\ge 0}^{s_1}}\alpha_1^{|y_{1}|}\,,
	\end{equation}
	where $y_k^\ell =(y_k, y_{k+1},\dots, y_\ell)$. Note that the last sum gives $(1-\alpha_1)^{-s_1}$. If for $k<\ell<m$ we write $y_k^\ell y_{\ell+1}^m=y_k^m$, then
	\begin{equation}
	s_1=s_1\left(y_2^{k-1}x_k^{+\infty}\right)
	=1+2\sum_{i=2}^{k-1}(i-1)|y_{i}|+2\sum_{i=k}^{+\infty}(i-1)|x_{i}|\,.
	\end{equation}
	Substituting this in  \eqref{rec-a} we get
	\begin{align}
	Z_\alpha({x_k^{\infty}})&= \frac{1}{(1-\alpha_1)}\prod_{i=k}^{\infty}
	\left[\frac{\alpha_i}{(1-\alpha_1)^{2(i-1)}}\right]^{|x_i|}
	\sum_{\left\{y_2^{\infty}:y_k^{+\infty}=x_k^{+\infty}\right\}}\prod_{j=2}^{k-1}
	\left[\frac{\alpha_j}{(1-\alpha_1)^{2(j-1)}}\right]^{|y_j|}\nn\\
	& =\frac{1}{(1-\alpha_1)}\prod_{i=k-1}^{+\infty}
	\left(\theta\alpha\right)_i^{|\left(\tau x\right)_i|}
	\sum_{\left\{y_1^{\infty}:y_{k-1}^{\infty}=(\tau x)_{k-1}^{\infty}\right\}}\prod_{j=1}^{k-2}\left(\theta\alpha\right)_j^{|y_j|},
	\end{align}
	which gives \eqref{recrel}.
\end{proof}
We now compute $Z_\alpha({x_k^{\infty}})$.
\begin{lemma}[Tail partition function]\label{B-D}
	For any fixed $k\geq 2$ and $x_k^{\infty}$ we have
	\begin{align}
	\label{ss35}
	Z_\alpha({x_k^{\infty}})=\left[\prod_{i=0}^{k-2}
	\left(\frac{1}{\left(1-\left(\theta^i\alpha\right)_1\right)}\right)\right]
	\left[\prod_{j=k}^{+\infty}\left(\theta^{k-1}\alpha\right)_{j-k+1}^{|x_j|}\right]\,.
	\end{align}
\end{lemma}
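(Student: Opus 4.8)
The plan is to prove \eqref{ss35} by induction on $k\ge 2$, using the recurrence of Lemma \ref{A-D} as the engine. The key structural observation is that the right-hand side of \eqref{ss35} is self-similar under the pair of operations $(\alpha,x)\mapsto(\theta\alpha,\tau x)$ that appear in \eqref{recrel}: indeed $\theta^i(\theta\alpha)=\theta^{i+1}\alpha$ and $(\tau x)_j=x_{j+1}$, so applying the recurrence maps an instance of the claimed identity at level $k$ to the instance at level $k-1$ with shifted data. Consequently the whole argument reduces to checking that the index bookkeeping of the two products in \eqref{ss35} is consistent with this shift.

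For the base case $k=2$, I would apply \eqref{recrel} once to write $Z_\alpha(x_2^{\infty})=Z_{\theta\alpha}((\tau x)_1^{\infty})/(1-\alpha_1)$, and then note that $(\tau x)_1^{\infty}$ is a full slot diagram, so the second identity in \eqref{zainfi} gives $Z_{\theta\alpha}(\tau x)=\prod_{n\ge 1}(\theta\alpha)_n^{|x_{n+1}|}$. Reindexing $j=n+1$ turns this into $\prod_{j\ge 2}(\theta\alpha)_{j-1}^{|x_j|}$, which, together with the prefactor $1/(1-\alpha_1)=1/(1-(\theta^0\alpha)_1)$, is exactly \eqref{ss35} at $k=2$.

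For the inductive step, assume \eqref{ss35} holds at level $k-1$ for every choice of parameters. Applying Lemma \ref{A-D} and then the inductive hypothesis to $Z_{\theta\alpha}((\tau x)_{k-1}^{\infty})$, i.e.\ with $\alpha$ replaced by $\theta\alpha$ and $x$ by $\tau x$, produces a prefactor $\prod_{i=0}^{k-3}(1-(\theta^{i+1}\alpha)_1)^{-1}$ and a tail product $\prod_{j\ge k-1}(\theta^{k-1}\alpha)_{\,j-k+2}^{\,|x_{j+1}|}$. Shifting the summation index by one in the prefactor and setting $j'=j+1$ in the tail product recovers precisely the two products of \eqref{ss35} at level $k$, while the extra factor $1/(1-\alpha_1)=1/(1-(\theta^0\alpha)_1)$ coming from \eqref{recrel} supplies exactly the missing $i=0$ term of the prefactor. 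This closes the induction.

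The only genuine work is the index arithmetic: one must keep straight the identities $\theta^i\circ\theta=\theta^{i+1}$ and $(\tau x)_j=x_{j+1}$, together with the two shifts of the product variables, so that the ranges and exponents line up. I expect this reindexing, rather than any conceptual difficulty, to be the main obstacle, and it is worth writing out the ranges explicitly to guard against off-by-one errors.
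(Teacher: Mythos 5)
Your proposal is correct and is essentially the paper's own argument: the paper simply "iterates the recursion \eqref{recrel} $k-1$ times" to reach $Z_{\theta^{k-1}\alpha}((\tau^{k-1}x)_1^{\infty})$, which is a fully specified slot diagram and hence equals $\prod_{i\ge1}(\theta^{k-1}\alpha)_i^{|x_{i+k-1}|}$, exactly the terminal identity you invoke. Your packaging of that iteration as a formal induction on $k$, with the self-similarity $(\alpha,x)\mapsto(\theta\alpha,\tau x)$ made explicit, is just a cleaner bookkeeping of the same steps, and your index arithmetic checks out.
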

\begin{proof}
	Iterating $k-1$ times the recursion \eqref{recrel} we have
	\[
	Z_\alpha({x_k^{\infty}})=\left[\prod_{i=0}^{k-2}
	\left(\frac{1}{\left(1-\left(\theta^i\alpha\right)_1\right)}\right)\right]
	Z_{\theta^{k-1}\alpha}({\left(\tau^{k-1}x\right)_1^{+\infty}})\,.
	\]
	The statement is now obtained observing that for any $x$ we have
	\[
	Z_\alpha({x_1^{+\infty}})=\prod_{i=1}^{\infty}\alpha_i^{|x_i|}\,,
	\]
	because the complete slot diagram is fixed so that there are no sums to be done.
\end{proof}
We now compute the partition function $Z_\alpha$.
Denoting by $Z^m_\alpha$ the weight of the excursions having $m$ as maximum soliton size, we have
\begin{equation}\label{ZM}
Z^m_\alpha:=\sum_{x: M(x)=m}Z_\alpha(x)\,, \qquad m\geq 0\,.
\end{equation}

\begin{lemma}[Finiteness of the partition function]\label{51a}
	The partition function $Z_\alpha$ is finite if and only if
	\begin{equation}\label{cond-sum}
	\sum_{m=0}^{+\infty}
	\left(\theta^{m}\alpha\right)_1<\infty.
      \end{equation}
      Furthermore, 
	\begin{equation}\label{ven}
	Z^m_\alpha=\left(\theta^{m-1}\alpha\right)_1\prod_{j=0}^{m-1}\left(\frac{1}{1-\left(\theta^{j}\alpha\right)_1}\right)\,, \qquad m\geq 1,
	\end{equation}
	and
	\begin{equation}\label{sab}
	Z_\alpha=1+\sum_{m=1}^{+\infty}
	\left(\theta^{m-1}\alpha\right)_1\prod_{j=0}^{m-1}
	\left(\frac{1}{1-\left(\theta^{j}\alpha\right)_1}\right)\,.
	\end{equation}
\end{lemma}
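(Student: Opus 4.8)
The plan is to evaluate $Z_\alpha$ by splitting it according to the maximal soliton size, i.e.\ to use the partition $Z_\alpha=\sum_{m\ge0}Z^m_\alpha$ coming from \eqref{ZM} (every slot diagram has a well-defined $M(x)$), and to compute each $Z^m_\alpha$ via the tail partition function of Lemma \ref{B-D}.

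First I would record that $Z^0_\alpha=1$, since the only slot diagram with $M(x)=0$ is the empty excursion, of weight equal to the empty product $1$. For $m\ge1$, a slot diagram with $M(x)=m$ has $s_m=1$ and $x_\ell\equiv0$ for $\ell>m$ by \eqref{sk}--\eqref{sk2}, so it is specified by a single entry $x_m(0)\ge1$ together with its lower components $x_1,\dots,x_{m-1}$; its tail is $x_m^{\infty}=(x_m(0),0,0,\dots)$. Hence
\begin{align}
Z^m_\alpha=\sum_{x_m(0)\ge1}Z_\alpha(x_m^{\infty}),\nonumber
\end{align}
where $Z_\alpha(x_m^{\infty})$ already sums over the admissible lower components. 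Feeding this tail into Lemma \ref{B-D} (the case $m=1$, where only the record slot is available, is immediate, and $m\ge2$ uses the stated formula) collapses the product $\prod_{j\ge m}(\theta^{m-1}\alpha)_{j-m+1}^{|x_j|}$ to the single factor $(\theta^{m-1}\alpha)_1^{\,x_m(0)}$. Summing the geometric series $\sum_{n\ge1}(\theta^{m-1}\alpha)_1^{\,n}$ over $x_m(0)=n$ then produces exactly \eqref{ven}, and summing over $m\ge0$ gives \eqref{sab}.

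For the finiteness criterion I would abbreviate $a_j:=(\theta^j\alpha)_1$ and set $P_m:=\prod_{j=0}^{m-1}(1-a_j)^{-1}$ with $P_0=1$, so that \eqref{ven} reads $Z^m_\alpha=a_{m-1}P_m$. The key observation is the telescoping identity
\begin{align}
Z^m_\alpha=a_{m-1}P_m=P_{m-1}\,\frac{a_{m-1}}{1-a_{m-1}}=P_m-P_{m-1},\nonumber
\end{align}
whence $Z_\alpha=1+\sum_{m\ge1}(P_m-P_{m-1})=\lim_{m\to\infty}P_m=\prod_{j\ge0}(1-a_j)^{-1}$. By the standard criterion for infinite products with factors $a_j\in[0,1)$ (compare $-\log(1-a_j)$ with $a_j$), this limit is finite if and only if $\sum_{j\ge0}a_j<\infty$, which is precisely \eqref{cond-sum}.

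The step requiring genuine care — and the one I expect to be the main obstacle — is that both the geometric summation giving \eqref{ven} and the infinite-product criterion are valid only while $a_j=(\theta^j\alpha)_1<1$. If $a_{j^*}\ge1$ for some least $j^*$, then the prefactor $\prod_{i<j^*}(1-a_i)^{-1}$ is a finite positive number but the geometric series in $Z^{j^*+1}_\alpha$ diverges, so $Z_\alpha=+\infty$; crucially, \eqref{ven} would here return a spurious nonpositive number and must not be invoked. I would therefore dispatch this case separately, noting that it sits outside the relevant regime: by \eqref{ppc35} it corresponds to $q_{j^*+1}=a_{j^*}\ge1$, i.e.\ to $q\notin\mathcal Q$, so in the ambient Theorem \ref{teonuovo} both $\alpha\in\mathcal A$ and $q\in\mathcal Q$ fail simultaneously. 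Under the standing understanding that all iterates obey $(\theta^j\alpha)_1\in[0,1)$, the telescoping computation above closes the argument.
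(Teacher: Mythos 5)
Your proof is correct and follows the same overall strategy as the paper's: decompose $Z_\alpha=\sum_{m\ge0}Z^m_\alpha$ by maximal soliton size and compute each $Z^m_\alpha$ by integrating out the lower components. The small differences are worth noting. For \eqref{ven} you invoke Lemma \ref{B-D} applied to the tail $x_m^\infty=(x_m(0),0,0,\dots)$ and then sum the geometric series in $x_m(0)$, whereas the paper re-runs the iteration of Lemma \ref{A-D} directly on $Z^m_\alpha$, reducing it to $Z^1_{\theta^{m-1}\alpha}=(\theta^{m-1}\alpha)_1/(1-(\theta^{m-1}\alpha)_1)$; these are the same computation packaged differently, and your version avoids duplicating work already done. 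For the finiteness criterion your telescoping identity $Z^m_\alpha=P_m-P_{m-1}$ is genuinely cleaner than the paper's comparison argument (``term of a converging series times a bounded factor''): it gives the exact closed form $Z_\alpha=\prod_{j\ge0}\bigl(1-(\theta^j\alpha)_1\bigr)^{-1}$ at this stage, which the paper only derives later as \eqref{alt} inside the proof of Theorem \ref{teonuovo}, and it makes the equivalence with \eqref{cond-sum} an immediate application of the standard infinite-product criterion. Your caveat about the regime $(\theta^j\alpha)_1<1$ is legitimate and consistent with the paper, which flags the same constraint in the remark following Theorem \ref{teonuovo}; handling the degenerate case $(\theta^{j^*}\alpha)_1\ge1$ separately, as you do, is the right way to make the ``only if'' direction airtight.
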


\begin{proof}
	Since  the weight of the empty excursion is 1, we have $Z^0_\alpha=1$ and \eqref{sab} is obtained from \eqref{ven} from
	the relation $Z_\alpha=\sum_{m=0}^{+\infty}Z^m_\alpha$. To show \eqref{ven} we sum over all possible slot diagrams
	\begin{align}
	& Z^m_\alpha=\sum_{x_m(0)=1}^{+\infty}\alpha_m^{x_m(0)}\sum_{\left\{x_{m-1}\in \mathbb Z_{\geq 0}^{s_{m-1}}\right\}}\alpha_{m-1}^{|x_{m-1}|}\dots
	\sum_{\left\{x_{1}\in \mathbb Z_{\geq 0}^{s_{1}}\right\}}\alpha_{1}^{|x_{1}|}\,,
	\end{align}
	where $s_k=s_k(x_{k+1}^{+\infty})$ given by \eqref{sk}-\eqref{sk2}
	with $x_k(0)=0$ for any $k>m$.
	Note that $x_m(0)$ has to be summed from $1$ up to $+\infty$ since at level $m$ there must be at least one soliton. All the other variables are summed from $0$ to $+\infty$.
	Sum on $x_1$, use \eqref{sk}-\eqref{sk2}, change name to the summed variables and iterate to obtain
	\begin{align}
	Z^m_\alpha
	&=\frac{1}{1-\alpha_1}\sum_{x_{m-1}(0)=1}^{+\infty}
	\Bigl(\frac{\alpha_m}{\left(1-\alpha_1\right)^{2(m-1)}}\Bigr)^{x_{m-1}(0)}\dots
	\sum_{\left\{x_{1}\in \mathbb N^{s_{1}}\right\}}\Bigl(\frac{\alpha_2}{\left(1-\alpha_1\right)^{2}}\Bigr)^{|x_{1}|}\\
	& =\frac{Z^{m-1}_{\theta\alpha}}{1-\alpha_1}=\dots = Z^1_{\theta^{m-1}\alpha}
	\prod_{l=0}^{m-2}\left(\frac{1}{1-\left(\theta^{l}\alpha\right)_1}\right)\,.
	\end{align}
	Hence \eqref{ven} follows from
	$$
	Z^1_{\alpha}=\sum_{x_{1}(0)=1}^{+\infty}\alpha_1^{x_{1}(0)}=\frac{\alpha_1}{1-\alpha_1}.
	$$
	It remains to discuss the convergence. We use that if $0<\beta_m<1$ then $\sum_m \beta_m<+\infty$ if and only if $\prod_m(1-\beta_m)>0$. When \eqref{cond-sum} is satisfied
	the generic term in \eqref{sab} is the product of a term of a converging series times a term converging to a finite value and therefore the series in \eqref{sab} is converging. While instead when condition \eqref{cond-sum} is violated
	the generic term in the series in \eqref{sab} is the product of a term of a diverging series times a diverging term  and therefore the series in \eqref{sab} is diverging.
\end{proof}

\begin{proof}[Proof of Theorem \ref{teonuovo}]
Consider $\alpha\in \mathcal A$ and $q=q(\alpha)$. By \eqref{ppc35} we have
\begin{equation}\label{consumq}
\sum_{k\ge1}q_k = \sum_{k\ge1}(\theta^{k-1}\alpha)_1 <\infty,
\end{equation}
by \eqref{cond-sum}. This proves $q\in\mathcal Q$.

Substituting  \eqref{ppc35} into \eqref{ven} and \eqref{sab}, we get
		\begin{equation}\label{venq}
		Z^m_\alpha=q_m\prod_{j=1}^{m}\left(\frac{1}{1-q_j}\right)\,, \qquad m\geq 1
		\end{equation}
		and
		\begin{equation}\label{sabq}
		Z_\alpha=1+\sum_{m=1}^{+\infty}
		q_m\prod_{j=1}^{m}
		\left(\frac{1}{1-q_j}\right)\,.
		\end{equation}

Under condition \eqref{consumq} the measure $\big(q_m\prod_{\ell>m}(1-q_\ell)\big)_{m\ge0}$ (recall that $q_0=1$) is a probability in $\mathbb Z_{\geq 0}$; multiplying therefore \eqref{sabq} by $\prod_{k\ge1}(1-q_k)$ we have
		\begin{align}
		Z_\alpha\prod_{k\ge1}(1-q_k) &= \prod_{k\ge1}(1-q_k) +\sum_{m\ge 1} q_m\prod_{k>m}(1-q_k)=1.\nn
		\end{align}
		This gives the alternative useful representation
		\begin{equation}\label{alt}
		Z_\alpha= 
                \prod_{k\geq 1}(1-q_k)^{-1}\,.
		\end{equation}	
		We will show that $\nu_\alpha$ satisfies the following identities.
		\begin{align}
		\nu_\alpha\left(M(x)=m\right)&
		= q_m(\alpha)\,\prod_{\ell>m}\big(1-q_\ell(\alpha)),    \quad m\ge 0,                        
		\label{ss36a}\\[2mm]
		\nu_\alpha\big(x_m(0)\big|M(x)=m\big)&= q_m^{|x_m(0)|-1}(\alpha) (1-q_m(\alpha)),  \label{ss37a}\\[2mm]
		\nu_\alpha\big(x_k\big|x_{k+1}^{\infty}\big)&= q_k^{|x_k|}(\alpha) (1-q_k(\alpha))^{s_k(x)}, \label{ss38a}
		\end{align}
Since these are the identities \eqref{ss36}-\eqref{ss38} characterizing $\varphi_q$,  \eqref{ss36a}-\eqref{ss38a} imply $\nu_\alpha=\varphi_{q}$.  
		 
		 By definition we have
		\begin{align}
		\nu_\alpha\big(M(x)=m\big)= \frac{Z^m_\alpha}{Z_\alpha}.
		\end{align}
		Using \eqref{venq} and \eqref{alt} 
		we get  \eqref{ss36a}.
		Again by definition we have
		\begin{align}
		\nu_\alpha\big(x_k\big|x_{k+1}^{+\infty}\big)=\frac{Z_\alpha({x_k^{\infty}})}
		{Z_\alpha({x_{k+1}^{\infty}})}\,.
		\end{align}
		Using \eqref{ss35} and observing that
		\begin{align}
		\frac{\left(\theta^{k-1}\alpha\right)_{i+1}}{\left(\theta^{k}\alpha\right)_{i}}
		=\big(1-(\theta^{k-1}\alpha)_1\big)^{2i}
		\end{align}
		we obtain directly \eqref{ss37a}, \eqref{ss38a}. This proves  $\nu_\alpha=\varphi_{q(\alpha)}$.

\smallskip

Conversely, assume $q\in\mathcal Q$. Then $\prod_{\ell\ge1}(1-\q_\ell)>0$ and we have
\begin{align}
\varphi_q(x)&=\Bigl(\prod_{\ell>M}(1-\q_\ell)\Bigr)\prod_{k=1}^{M}\q_k^{|x_{k}|}(1-\q_k)^{s_k} \label{ppc}\\
            &=\Bigl(\prod_{\ell\ge1}(1-\q_\ell)\Bigr) \prod_{k=1}^{M}\q_k^{|x_{k}|}(1-\q_k)^{s_k-1}\,. \label{ppc21}\\
             & = \Bigl(\prod_{\ell\ge1}(1-\q_\ell)\Bigr)\prod_{k=1}^{M}\Big[\q_k\prod_{\ell=1}^{k-1}(1-\q_\ell)^{2(k-\ell)}\Big]^{|x_{k}|}\,.\label{ppc2}
\end{align}
where we used $  s_\ell = 1+ 2\sum_{k>\ell} (k-\ell) |x_k|$. 
Comparing this expression with \eqref{nusolax}, we get $Z_{\alpha(q)} = (\prod_{\ell\ge1}(1-\q_\ell))^{-1}<\infty$ and $\varphi_q=\nu_{\alpha(q)}$, using \eqref{ppc3d}.

\smallskip

It remains to prove \eqref{pp46}. It suffices to show that $q\in\mathcal Q^+$ if and only if the mean excursion lenght under $\varphi_q$ has finite expectation.

Since by \eqref{sk2} the value of $s_k$ depends just on $|x_\ell|$ with $\ell>k$
and by the property \eqref{ss38} of the measure $\varphi_q$ we can apply Wald Theorem getting
\begin{equation}\label{wimbledon}
E_{\varphi_q}\left(|x_k|\right)=E_{\varphi_q}\left(\sum_{j=1}^{s_k}x_k(j)\right)=
m_k\beta_k,
\end{equation}
where
\begin{align}
  m_k&:=\frac{q_k}{1-q_k}\quad \text{is the mean of a Geometric$(1-q_k)$ random variable \eqref{geomm}}\nn\\
  \beta_k&:=E_{\varphi_q}\left(s_k\right), \;k\ge 1. \nn
\end{align}
By definition we have that $\beta_k\geq 1$.

The mean excursion size under $\varphi_q$ is therefore given by
\begin{equation}\label{meanlq}
E_{\varphi_q}\left(\sum_{k=1}^{\infty}2k|x_k|\right)=\sum_{k=1}^\infty2km_k\beta_k\,.
\end{equation}
Consider relationship \eqref{sk2} and take expected value with respect to the measure $\varphi_q$ on both sides. Using \eqref{wimbledon} we get the recurrence
\begin{equation}\label{rer}
\beta_k=1+\sum_{\ell>k}2(\ell-k)m_\ell\beta_\ell\,, \qquad k=0,1,2,\dots\,.
\end{equation}
where we observe that when the system has a solution, $\beta_0$ is 1 plus the mean excursion size under $\varphi_q$. 
Section 3.3 of \cite{FNRW} shows that if $\sum_k km_k<+\infty$, then the recursion \eqref{rer} has a unique finite solution $(\beta_k)_{k\geq 0}$. Since $\sum_k km_k<+\infty$ is equivalent to $\sum_k kq_k<\infty$, 
we have proven that if $q\in \mathcal Q^+$ then the mean excursion size under $\varphi_q$ is finite, which in turn implies $\alpha(q)\in\cA^+$.

Conversely, if the mean excursion size under $\varphi_q$ is finite, then the series on the right hand side of \eqref{meanlq} is convergent. Since  $\beta_k\geq 1$ this implies that $\sum_k kq_k<\infty$ holds. 
\end{proof}

\begin{remark}[$\alpha\in\mathcal A$ is a local property] \rm
We point out that while the sets $\mathcal Q$ and $\mathcal Q^+$ are identified just by asymptotic properties of the parameters $q$ (i.e. changing the values of a finite number of them does not change the belonging or not to these sets), this is not the case for the sets $\mathcal A$ and $\mathcal A^+$. For example, consider $\alpha=(\alpha_1,\alpha_2,0,\dots)$ such that $\alpha_k=0$ for any $k>2$. Then also $q_k=0$ for any $k>2$ and the partition function can be explicitly computed. 
Using \eqref{alt} and \eqref{ppc33} we obtain
\begin{equation}\label{partfin}
Z_\alpha=\frac{1-\alpha_1}{(1-\alpha_1)^2-\alpha_2}\,.
\end{equation}
We have that \eqref{partfin} is finite and positive if and only if $0\leq\alpha_1<1$ and $0\leq \alpha_2\leq (1-\alpha_1)^2$. A similar but more involved computation can be done for any finite numbers of $\alpha$'s different from zero. Notice that all the constraints on the parameters $\alpha$ are also important in order that $0\leq 0\leq (\theta \alpha)_k<1$ in definition \eqref{teta}.
\end{remark}

\subsection{Random walks and Markov chains}
We apply Theorem \ref{teonuovo} to Bernoulli product measures and Markov chains to show that those measures as seen from a record have independent components and, as corollary, that they are $T$-invariant.
\begin{lemma}[Random walks]
Consider the law of an excursion of a random walk which moves upwards with probability $\lambda\in [0,1/2)$ and downwards with probability $1-\lambda$. This measure corresponds to $\nu_\alpha$ with $\alpha\in \mathcal A^+$ and given by
 \begin{align}
   \alpha_k &= \left(\lambda(1-\lambda)\right)^k\,, \qquad k\ge1, \label{aQ0}\\
   Z_\alpha&=(1-\lambda)^{-1}. \label{zalfa}
\end{align}
\end{lemma}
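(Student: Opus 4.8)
The plan is to compute directly the law of a single excursion of the random walk and match it term by term with $\nu_\alpha$; the only delicate point is pinning down the correct normalization contributed by the records.

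First I would make the excursion law explicit through the renewal structure at records. Writing the walk as in \eqref{x11}, a record is an empty box at which the walk reaches a new strict minimum, and since $\lambda<1/2$ the walk has negative drift, so records occur infinitely often and the strong Markov property at ladder epochs makes the walk decompose into i.i.d.\ blocks delimited by consecutive records. Each block is one excursion $\vep\in\cE$ of length $2n(\vep)$ followed by the single downward record step that brings the walk one unit below its current minimum; here one must be careful that a soft excursion may return to height $0$ several times without creating a record, so the block is cut only when the walk drops strictly below its current minimum. An excursion of length $2n$ has exactly $n$ up-steps and $n$ down-steps, so the probability of a given block is $\lambda^{n}(1-\lambda)^{n}\cdot(1-\lambda)$, the extra factor $(1-\lambda)$ being the terminating record step (for $n=0$ this recovers the probability $1-\lambda$ of the empty excursion). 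Hence I would record that the law of an excursion is
\begin{align}
P(\vep)=(1-\lambda)\big(\lambda(1-\lambda)\big)^{n(\vep)}.\nn
\end{align}

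Next I would check that $P$ is a probability and read off $Z_\alpha$. Grouping by length, $\sum_{\vep}P(\vep)=(1-\lambda)\sum_{n\ge0}C_n\big(\lambda(1-\lambda)\big)^n$ with $C_n=|\cE_n|$ the Catalan number \eqref{catalan}. Evaluating the generating identity $\sum_{n\ge0}C_n x^n=\frac{1-\sqrt{1-4x}}{2x}$ at $x=\lambda(1-\lambda)$, and using $1-4\lambda(1-\lambda)=(1-2\lambda)^2$ with $\sqrt{(1-2\lambda)^2}=1-2\lambda$ since $\lambda<1/2$, this sum equals $(1-\lambda)^{-1}$; thus $P$ is a probability and $\sum_{\vep}\big(\lambda(1-\lambda)\big)^{n(\vep)}=(1-\lambda)^{-1}$. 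The identification with $\nu_\alpha$ is then purely combinatorial: with $\alpha_k=\big(\lambda(1-\lambda)\big)^k$ the weight in \eqref{nusola} collapses, because a $k$-soliton occupies $2k$ boxes and, inside an excursion, each of the $2n(\vep)$ boxes belongs to exactly one soliton, so $\sum_{k\ge1}k\,n_k(\vep)=n(\vep)$ and hence $\prod_{k\ge1}\alpha_k^{n_k(\vep)}=\big(\lambda(1-\lambda)\big)^{n(\vep)}$. Therefore $Z_\alpha=\sum_\vep\prod_k\alpha_k^{n_k(\vep)}=(1-\lambda)^{-1}$, which is \eqref{zalfa}, and $\nu_\alpha(\vep)=Z_\alpha^{-1}\big(\lambda(1-\lambda)\big)^{n(\vep)}=P(\vep)$, which is \eqref{aQ0}. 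Finally, to obtain $\alpha\in\cA^+$ I would note that the mean excursion length under $\nu_\alpha=P$ equals $2(1-\lambda)\sum_{n\ge0}nC_n\big(\lambda(1-\lambda)\big)^n$, which is finite because $\lambda(1-\lambda)<1/4$ for $\lambda\in[0,1/2)$ lies strictly inside the radius of convergence $1/4$ of the Catalan series.

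I expect the main obstacle to be the first step, namely identifying the excursion law with the correct normalization: one must keep track of the terminating record down-step (the factor $1-\lambda$) in the renewal decomposition and handle carefully the cutting rule for soft excursions that revisit height $0$. Once this is set up, the remaining ingredients are the Catalan generating-function evaluation and the box-counting identity $\sum_{k}k\,n_k=n$, both of which are routine and make the product weight $\prod_k\alpha_k^{n_k}$ collapse to a single power of $\lambda(1-\lambda)$.
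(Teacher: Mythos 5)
Your proposal is correct and follows essentially the same route as the paper: identify the block law $(1-\lambda)\bigl(\lambda(1-\lambda)\bigr)^{n(\vep)}$ via the record renewal structure, collapse $\prod_k\alpha_k^{n_k}$ using $\sum_k k\,n_k(\vep)=n(\vep)$, and evaluate the Catalan generating function at $\beta=\lambda(1-\lambda)$ to get $Z_\alpha=(1-\lambda)^{-1}$. The only cosmetic difference is in verifying $\alpha\in\cA^+$: the paper computes $\sum_k k\rho_k(\alpha)=\beta\,\partial_\beta\log Z_\alpha=\lambda/(1-2\lambda)$ explicitly, whereas you argue convergence from $\lambda(1-\lambda)<1/4$ lying strictly inside the radius of convergence of the Catalan series --- both are valid and amount to the same observation.
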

\begin{proof}
	Let $\cE_n$ be the set of excursions of length $2n$. The distribution of an excursion $\vep\in\cE_n$ starting at record 0 for the random walk is $(\lambda(1-\lambda))^n(1-\lambda)$, where the last $(1-\lambda)$ is the probability to go to $-1$ at step $2n+1$. Since $\sum_{k=1}^\infty kn_k[\vep]=n$ for $\vep\in\cE_n$, we have that the random walk excursion has law $\nu_\alpha$ with parameters \eqref{aQ0}-\eqref{zalfa}.
	
	Notice that $Z_\alpha$ can also be computed when $\alpha_k=\beta^k$ for some $\beta$ as follows
	\begin{equation}\label{catcomp}
	Z_\alpha=\sum_{\vep \in \cE}\prod_{k\geq 1}\alpha_k^{n_k}=
	\sum_{n=0}^{+\infty}\sum_{\vep\in \cE_n} \beta^n= \sum_{n=0}^{+\infty}\frac{1}{n+1}\binom{2n}{n} \beta^n,
	\end{equation}
	where we used \eqref{catalan}. The last expression is the generating function of the Catalan numbers \cite{MR1676282}. Hence,
	$
	Z_\alpha=\frac{2}{1+\sqrt{1-4\beta}}= \frac1{1-\lambda}\,,
	$
	when $\beta=\lambda(1-\lambda)$. The fact that $\alpha\in \mathcal A^+$ can be verified computing
        \begin{align}
          \sum_{k\geq 1} k\rho_{k}(\alpha)=\beta \frac{\partial}{\partial \beta}\log Z_\alpha\Big|_{\beta=\lambda(1-\lambda)}= \frac{\lambda}{1-2\lambda}.
        \end{align}
        The corresponding parameters $q(\alpha)$ can be computed by \eqref{ppc33} but it seems that there is not a simple analytical expression valid for each $k$.
      \end{proof}

The following is a generalization of the previous Lemma.
Let $Q=(Q(i,j))_{i,j\in\{0,1\}}$ be the transition matrix of a Markov chain on $\{0,1\}$ and assume that the stationary probability measure $p=(p_0,p_1)$ of $Q$ satisfies $p_1\in(0,\frac12)$, that is, $Q(0,1)<Q(1,0)$.  Let  $\mu_{Q}$ be the distribution of a double infinite stationary trajectory of the Markov chain. 

\begin{lemma}[Markov chains]\label{c4e}
Consider the law of an excursion of $W\eta$ when the configuration of balls $\eta$ is distributed as $\mu_Q$.
This law corresponds to $\nu_\alpha$ with $\alpha\in \mathcal A^+$ given by
\begin{align}
\label{aQ}
\alpha_k = ab^k,\quad k\ge1\,,
\end{align}
where
\begin{align}\label{pqrel}
\left\{
\begin{array}{l}
a=Q(0,1)Q(1,0)\big[Q(1,1)Q(0,0)\big]^{-1}\,,\\
b=Q(1,1)Q(0,0)\,.
\end{array}
\right.
\end{align}
We have moreover that $Z_\alpha=1/Q(0,0)$.
\end{lemma}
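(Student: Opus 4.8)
The plan is to compute the law of a single excursion under the record Palm measure of $\mu_Q$ directly as a product of transition probabilities, to re-express that product through two statistics of the excursion — the number of balls and the number of maximal blocks of consecutive balls — and finally to match the outcome with $\nusola(\vep)$ as given in \eqref{nusolax}. The one genuinely combinatorial input, which I expect to be the crux, is the identity between the number of ball-blocks of an excursion and its total number of solitons.

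First I would record the regeneration structure. Conditioned on a record at the origin (so that $\eta(0)=0$ and the walk is at a running minimum), the strong Markov property of the chain at record positions makes the successive excursions i.i.d., and the law of the excursion $\vep\in\cE$ occupying boxes $1,\dots,2n$, with $n:=n(\vep)$, equals $\prod_{i=1}^{2n+1}Q(\eta(i-1),\eta(i))$, where $\eta(0)=\eta(2n+1)=0$ are the two bounding records and $\eta(1),\dots,\eta(2n)$ is the excursion itself (the factor $i=2n+1$ being the down-step into the next record). An excursion has exactly $n$ balls and $n$ empty boxes, begins with a ball and ends with an empty box. Next I would count transitions: letting $R=R(\vep)$ be the number of maximal blocks of consecutive balls in $\vep$, the fact that the word $\eta(0),\dots,\eta(2n+1)$ starts and ends with $0$ forces the numbers of $0\!\to\!1$ and of $1\!\to\!0$ transitions to be each equal to $R$; counting the $n$ balls and the $n+1$ zeros that possess a successor then gives $n-R$ transitions of type $1\!\to\!1$ and $n+1-R$ of type $0\!\to\!0$. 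Hence the excursion law equals
\[
Q(0,0)^{\,n+1-R}\,Q(0,1)^{R}\,Q(1,0)^{R}\,Q(1,1)^{\,n-R}.
\]

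The key step, and the main obstacle, is the combinatorial identity $R(\vep)=N(\vep)$, where $N(\vep):=\sum_{k\ge1}n_k(\vep)$ is the total number of solitons. I would prove it by induction along the Takahashi--Satsuma algorithm of \S\ref{s4}: each iteration identifies one soliton and deletes from the \emph{current} configuration one finite run together with an equal number of boxes of the successive run. A short case analysis — distinguishing whether the deleted run consists of balls (its tail being empties that either shorten or merge the surrounding empty-blocks) or of empties (its head being balls whose deletion fuses the two adjacent ball-blocks) — shows that each deletion lowers the number of ball-blocks by exactly one. Since the algorithm performs $N(\vep)$ iterations and terminates at the empty configuration, the original number of ball-blocks is $N(\vep)$. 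The bookkeeping is delicate precisely because the head of a soliton need not be a contiguous block of $\vep$, so the count must be tracked on the reduced configurations produced by the algorithm rather than on $\vep$ itself.

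Finally I would substitute $R=N=\sum_k n_k$ and $n=\sum_k k\,n_k$ and read off the answer. With $a,b$ as in \eqref{pqrel} one checks
\[
Q(0,0)^{\,n+1-N}Q(1,1)^{\,n-N}\big(Q(0,1)Q(1,0)\big)^{N}=Q(0,0)\prod_{k\ge1}\big(ab^{k}\big)^{n_k},
\]
so the excursion law is $Q(0,0)\prod_{k\ge1}\alpha_k^{n_k(\vep)}$ with $\alpha_k=ab^k$. Comparing with \eqref{nusolax}, and using that both the excursion law and $\nusola$ are probabilities on $\cE$ carrying the same weights $\prod_k\alpha_k^{n_k}$, the normalisation is forced: summing the excursion law over $\cE$ gives $1$, whence $\Zsola=\sum_{\vep}\prod_k\alpha_k^{n_k}=1/Q(0,0)<\infty$, so $\alpha\in\cA$ and the excursion law is exactly $\nusola$. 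To place $\alpha$ in $\mathcal A^+$ I would argue probabilistically: since $p_1\in(0,\tfrac12)$ the increments have strictly negative mean, so the inter-record distance — hence the excursion length — has finite expectation, giving $\alpha\in\mathcal A^+$ by \eqref{a+}; equivalently one may verify $q(\alpha)\in\cQ^+$ and invoke Theorem \ref{teonuovo}. The preceding random-walk lemma is recovered as the case in which $Q(i,\cdot)$ is independent of $i$, for which $a=1$ and $b=\lambda(1-\lambda)$.
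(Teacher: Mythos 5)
Your proposal is correct and, at its core, follows the same route as the paper: both arguments compute the Palm-conditional law of an excursion as the product $\prod_{i=1}^{2n+1}Q(\eta(i-1),\eta(i))$ and regroup it into $Q(0,0)\prod_k(ab^k)^{n_k}$, then read off $Z_\alpha=1/Q(0,0)$ from the fact that the excursion law is a probability on $\cE$. The differences are in the supporting details, and they are to your credit. First, the paper's per-soliton bookkeeping (``a factor $(Q(0,0)Q(1,1))^{k-1}$ for each $k$-soliton, a factor $Q(0,1)Q(1,0)$ for each soliton'') silently uses the identity you isolate as the crux, namely that the number of maximal ball-blocks (local maxima of the walk) equals $\sum_k n_k$; the paper invokes it again, explicitly but without proof, in its Narayana-number cross-check of $Z_\alpha$. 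Your induction along the Takahashi--Satsuma algorithm (each identified soliton removes or fuses runs so as to decrease the number of ball-blocks by exactly one) is a clean proof of this fact and fills a genuine gap in the exposition; your case analysis is sound, including the boundary subcases where the successive run is consumed entirely. Second, for $\alpha\in\cA^+$ the paper differentiates the Narayana generating function, $2b\,\partial_b\log(1+F(b,a))$, to get the mean excursion size, whereas you argue probabilistically from the negative drift $2p_1-1<0$ that the inter-record distance has finite mean (equal to $1/(1-2p_1)$ by Palm inversion); both are valid, and yours avoids generating functions altogether. Your closing remark that the random-walk lemma is the special case $a=1$, $b=\lambda(1-\lambda)$ is also consistent with the paper.
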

\begin{proof} The probability of an excursion of the chain has a factor $(Q(0,0)Q(1,1))^{k-1}$ for each $k$-soliton, a factor $Q(0,1)Q(1,0)$ for each soliton and a global factor $Q(0,0)$ coming from the probability to go to $-1$ at the end of the excursion. That is, the probability of an excursion $\vep$ is given by
	\begin{align}
	Q(0,0) \prod_k \left(ab^k\right)^{n_k(\vep)}
	\end{align}
	that is $\nu_\alpha(\vep)$ with $\alpha$ given by \eqref{aQ} and $Z_\alpha= 1/Q(0,0)$.
	
	We can also obtain $Z_\alpha$ by summing the weights. A classic result says that the number of excursions of length $2n$ and having exactly $k$ local maxima is given by the Narayana numbers
	$$
	N(n,k)=\frac 1n\binom{n}{k}\binom{n}{k-1},
	$$
	see for example exercise 6.36 of \cite{MR1676282}. Since $\sum_{k=1}^\infty n_k$ coincides with the number of local maxima and $n=\sum_{k=1}^\infty kn_k$,
	the partition function of our Lemma is given by
	\begin{equation}\label{forZN}
	Z_\alpha=1+\sum_{n=1}^{+\infty}\sum_{k=1}^{n}N(n,k)a^kb^n=1+F(b,a)\,.
	\end{equation}
	where $F$ is the generating function of the Narayana numbers and it is known (\cite{MR1676282} exercise 6.36) to be
	$$
	F(b,a)= \frac{1-b(1+a)-\sqrt{(1-b-ba)^2-4b^2a}}{2b}\,.
	$$
	Inserting \eqref{pqrel} in \eqref{forZN} and using $Q(0,0)>Q(1,1)$ (which holds because the density is below $1/2$) we get
	$Z_\alpha= 1/Q(0,0)$ after some elementary steps.
	The fact that $\alpha\in \mathcal A^+$ can be obtained for example using \eqref{forZN} since the mean excursion size is given by
	$$
	2b\frac{\partial}{\partial b}\log\left(1+F(b,a)\right)\,.
	$$
	Here too the corresponding parameters $q(\alpha)$ can be computed by \eqref{ppc33} but it seems that there is not a simple analytical expression valid for each $k$.
\end{proof}


\section{Infinitely many balls}
\label{imb}
In this Section we consider the space of configurations with infinitely many balls, discuss the BBS dynamics, define measures on this space concatenating excursions, discuss the soliton decomposition of these measures and show that random configurations obtained by concatenating independent excursions with law $\nu_\alpha$ have independent components. As a consequence we describe a set of invariant measures with independent components. To make these statements precise we need to introduce Palm measures. 

For each  $\lambda\in[0,1]$ denote the set of configurations with density $\lambda$ by
\begin{align}
\label{eq:space}
\cX_\lambda &:=\Bigl\{\eta\in\{0,1\}^\Z: \lim_{y\to\infty}\,\frac1{y} \sum_{z=-y}^0\eta(z) =  \lim_{y\to\infty}\,\frac1{y} \sum_{z=0}^y\eta(z) =\lambda\Bigr\}, \quad \hbox{and } \nn\\
\cX&:=\cup_{0\leq\lambda<\frac12} \cX_\lambda,
\end{align}
the set of configurations with density less than $\frac12$. As we see below this space is conserved by the dynamics.

Consider a walk $\xi$. Recall that $z\in \mathbb Z$ is a \emph{record} for $\xi$ if $\xi(z)<\xi(z')$ for any $z'<z$. Notice that if $\eta\in\cX_\lambda$ and $\lambda<\frac12$, then the records have density $1-2\lambda$, as the number of empty boxes equals the number of balls between records. Denote the set of records of $\eta$ by 
$$R\eta:= \left\{z\in \mathbb Z\,:\, z\ \textrm{is \ a \ record\ of\ }W\eta\right\}$$ 
and 
$$r(\eta,i):=\min\{z\in\Z:W\eta(z)=-i\}$$ 
the position of \emph{record $i$} of the walk $W\eta$. When $\eta\in\cX$ the position $r(\eta,i)$ is well defined and belongs to $\Z$ for each $i\in \mathbb Z$. We use the notation $r(\xi,i):=r(\eta,i)$ when 
$\xi=W\eta$.

\subsection{Concatenating excursions}
\label{ce11}

Given $\eta\in\cX$ and $i\in\Z$, call $\eta^{(i)}$ the configuration between records $i$  and $i+1$ translated to the origin:
\begin{align}
  \eta^{(i)}(z)&:=\eta(r(\eta,i)+z)\,\one\{0<z<r(\eta,i+1)-r(\eta,i)\} \text{ and }\label{exc}\\
  \vep^{(i)}&:=W\eta^{(i)}\label{exc1}
\end{align}
be the corresponding walk which is indeed an excursion. The walk $\vep^{(i)}$ is called excursion $i$  of $\eta$. If $r(\eta,i+1)=r(\eta,i)+1$, we say that excursion $i$ is empty.
Again $\eta^{(i)}$ and $\vep^{(i)}$ can be considered either on a finite interval or on the whole $\mathbb Z$, since all the boxes are empty outside of a finite window.
We denote by $\underline\vep=(\vep^{(i)})_{i\in\Z}$ the collection of excursions of $\eta\in\cX$. To make the dependence on $\eta$ explicit, we may write $\vep^{(i)}[\eta]$ and $\uvep[\eta]$.

The set of configurations in $\cX$ with a record at the origin is denoted by
\begin{align}
\hcX := \{\eta\in\cX: 0\in R\eta\}.
\end{align}
Since $\cX$ has ball density less than $1/2$, if $\eta\in\hcX$ then $\eta$ has infinitely many records to the right and left of the origin, and hence, all its excursions are finite. As a consequence,
the map $\eta\mapsto \uvep[\eta]$ is a bijection between $\hcX$ and  a suitable subset of
$\cE^\Z$. The reverse map $\uvep\mapsto\eta=\eta[\uvep]$ puts record 0 of $\eta$ at the origin: $r(\eta,0)=0$ and recursively the other records using the iteration
\begin{align}\label{etavep1}
r(\eta,i+1) -r(\eta,i)&= r(\vep^{(i)},1)=2n(\vep^{(i)})+1,
\end{align}
and inserting excursion $i$ between records $i$ and $i+1$:
\begin{align}
\eta(r(\eta,i)+z) &= W^{-1}\vep^{(i)}(z), \quad 0\le z< r(\vep^{(i)},1).\label{etavep}
\end{align}
Given a configuration $\eta=\eta[\uvep]\in\hcX$, define the slots of $\eta$ by a conformal translation of the slots of the excursions:
\begin{align}
  \text{If $z$ is a $k$-slot for $\vep^{(i)}$, then $ r(\eta,i)+z$ is a $k$-slot for $\eta[\uvep]$.}
\end{align}
Since $0$ is always a $k$-slot for $\vep^{(i)}$, we have that the records of $\eta$ are also $k$-slots for all $k$.

Give label 0 to the $k$-slot at record 0: $\tts_k(\eta,0):= r(\eta,0)$, and enumerate the other $k$-slots by 
\begin{align}
  \label{pp11}
\hbox{$\tts_k(\eta,j):=$ position of the $j$-th $k$-slot, counting from $k$-slot 0, for $j\in\Z$}.
\end{align}
See Fig.~\ref{slot-enumerated-1}.

 \noindent\begin{minipage}{\linewidth}\begin{center}	
	\includegraphics[trim= 0 0 15mm 0,clip]{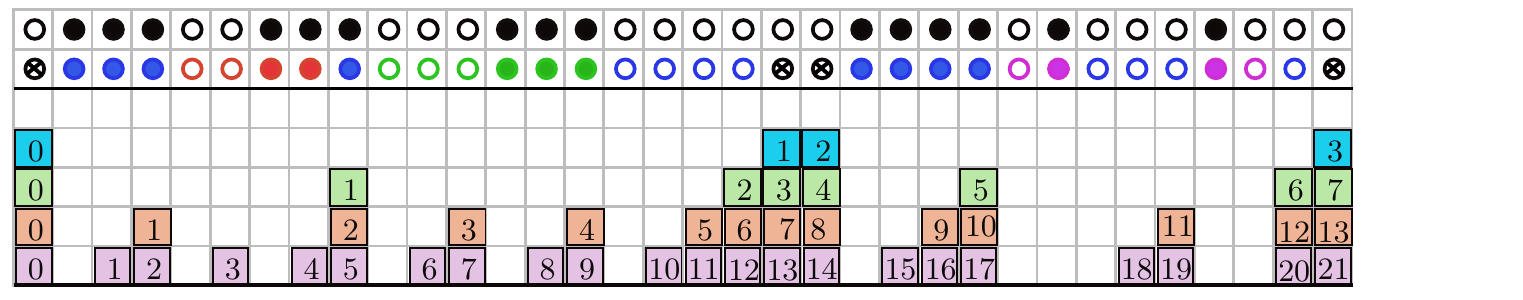}\\
	\captionof{figure}{Enumeration of slots. The upper line is a ball configuration $\eta$. The second line identifies records and solitons: records are crossed circles, 4-solitons are blue, 3-solitons green, 2-solitons red and 1-solitons purple; there are 4 records and 3 excursions, the second one being empty. In the lines below slots are represented by colored squares piled up below the corresponding box: pink, orange, green  and blue light colors correspond to 1,2,3 and 4-slots, respectively. Each $k$-slot located at the leftmost record has label 0, for each $k\ge 1$; successive $k$-slots at each line are then enumerated. The  $k$-slots labels for $k\ge4$ coincide with the labels of the records, as there are no $k$-solitons for $k>4$ in this example. 
          \label{slot-enumerated-1}}
\end{center}\end{minipage}


\paragraph{FNRW Soliton decomposition of ball configurations}  Recall the definition \eqref{appended1} and the notation \eqref{appended} where $\zeta_k(j)$ is the number of $k$-solitons appended to $k$-slot $j$.
Define $D:\hcX \to \big(\big(\Z_{\ge 0}\big)^\Z\big)^\N$ the transformation given by
\begin{align}
\label{D-def}
\eta\mapsto D\eta= \zeta =\Big(\big(\zeta_k(j)\big)_{j\in \mathbb Z}\Big)_{k\in \mathbb N}\,;
\end{align}
here $\zeta_k(j)\in \mathbb Z_{\geq 0}$.
In fact \S3.2 in \cite{FNRW} shows that $D$ is a bijection between $\hcX$ and 
\begin{align}
\cZ&:=\hbox{$\big\{\zeta\in\big(\big(\Z_{\ge 0}\big)^\Z\big)^\N: \sup\{k: \zeta_k(j)>0\}<\infty,\,\hbox{ for all }j\in\Z\big\}$}\label{zeta11}.
\end{align}
We give a construction of $D^{-1}$ in \S\ref{s5.1}. The array $\zeta=D\eta$ is called the \emph{soliton decomposition} of the configuration $\eta$. The \emph{$k$-component} of the configuration $\eta$ is  $\zeta_k=(\zeta_k(j))_{j\in \mathbb Z}$, the $k$-th row of the array $\zeta$; we also use the notation $\zeta_k=D_k\eta$.

\subsubsection{Concatenation of slot diagrams} 
Applying the Takahashi-Satsuma algorithm to each excursion $\vep^{(i)}$ we get the corresponding slot diagram $x^{(i)}=x[\vep^{(i)}]$. We can concatenate the slot diagrams to obtain the components $\zeta=D\eta$ of the configuration $\eta\in\hcX$.

The concatenation of the slot diagrams in Fig.~\ref{tantislot} is illustrated in Fig.~\ref{conca}. In Fig.~\ref{tantislot} we represent the values of the vectors of the slot diagrams inside boxes left justified. The values on line $k$ from the bottom on each diagram are the values of the vector $x^{(i)}_k$. The values on the column $j$ counting from left (and calling column $0$ the leftmost) represent the values $x^{(i)}_k(j)\,, k=1,2,\dots$. 

	
 \noindent\begin{minipage}{\linewidth}\begin{center}	
	\includegraphics{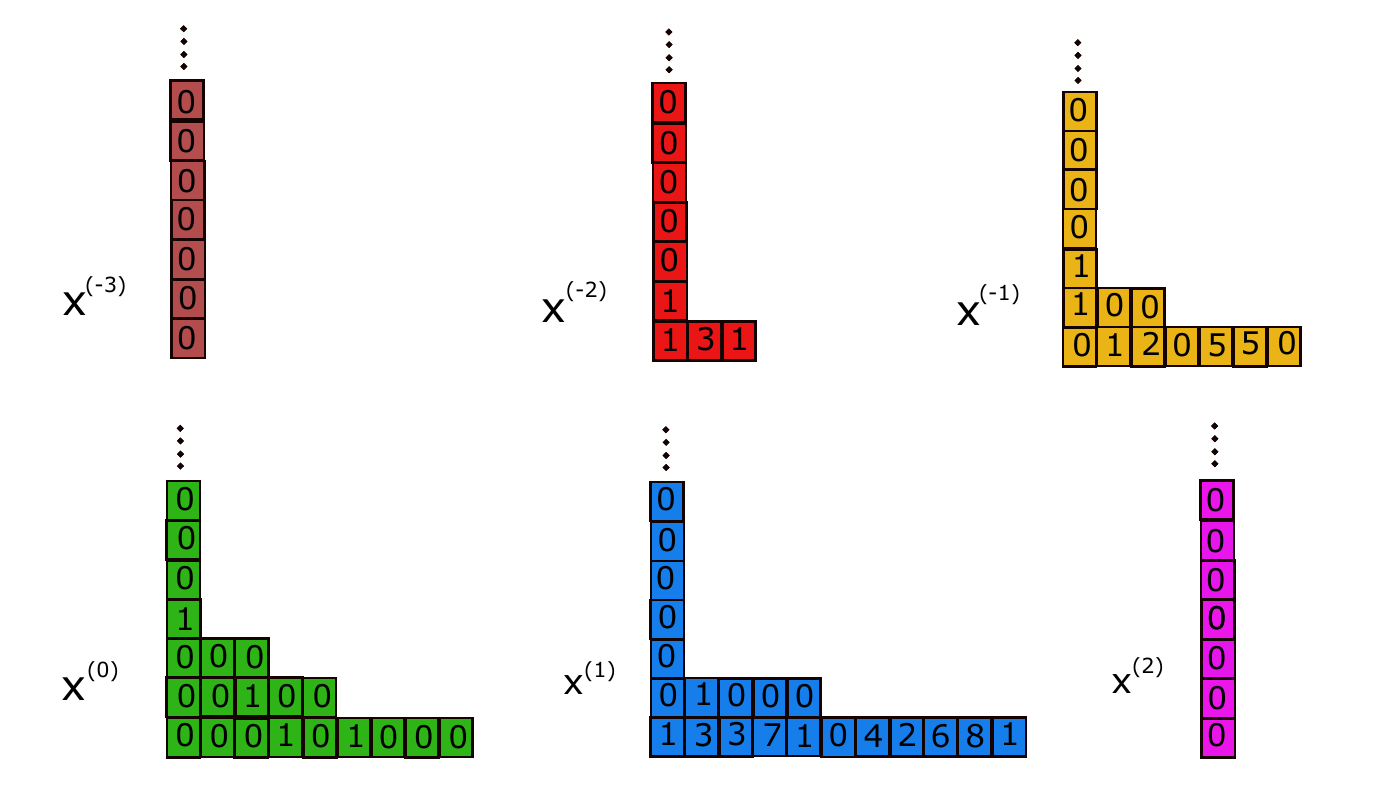}\\
	\captionof{figure}{Six different slot diagrams. Line $k$ from bottom on each diagram represents the values of the vector $x^{(i)}_k$ $i=-3,\dots 2$. Observe that  $x^{(-3)}=x^{(2)}=\emptyset$. 	\label{tantislot}}
\end{center}\end{minipage}	

The concatenation procedure is the following. The slot diagram $x^{(0)}$ maintains its shape and the column $\big(x^{(0)}_k(0)\big)_{k\geq 1}$ coincides with $\big(\zeta_k(0)\big)_{k\geq 1}$. The remaining slot diagrams are glued joining the rows of the same height in an unique row respecting the order of the labels. Boxes of the row $k$ of the slot diagram $x^{(i)}$ are to the right of the boxes of the row $k$ of the slot diagram $x^{(i-1)}$ and to the left of the boxes of the row $k$ of the slot diagram $x^{(i+1)}$. Recall that each slot diagram has an infinite column containing just zeros above the column number $0$. 

In Fig.~\ref{conca} we represent the concatenation of $6$ slot diagrams (we do not draw the infinite columns of zeros which should be drawn on the columns $-3,-2,-1,0,1,2$). Concatenating all the slot diagrams $\underline\vep[\eta]$ we obtain an infinite array such that on the column $j$ we read the values $\big(\zeta_k(j)\big)_{k\geq 1}$ of the components of the configuration $\eta$.
A formal description is given in the following paragraph.
\begin{figure}
	
	\centering
	\includegraphics{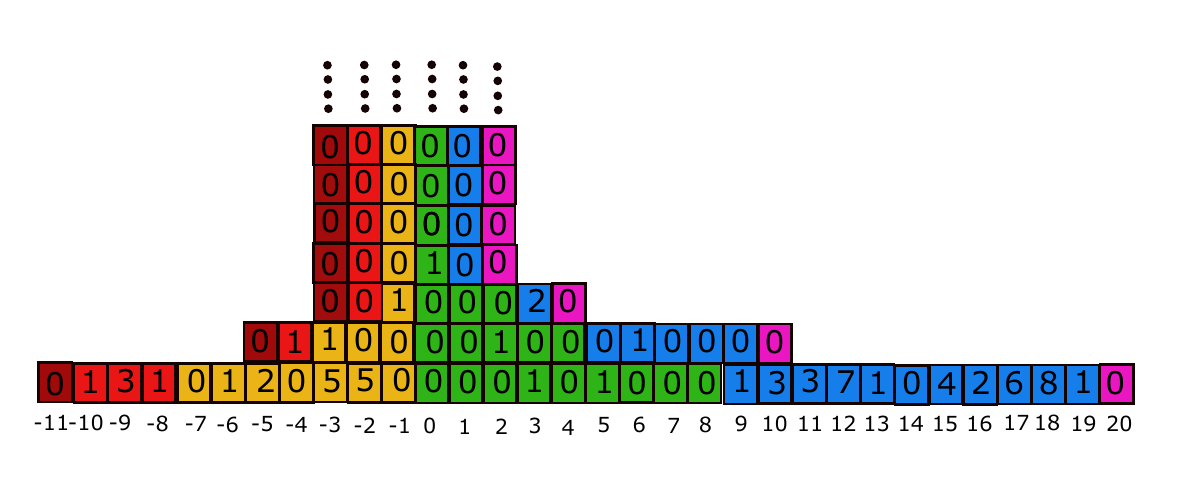}  
	
	\captionof{figure}{The result of the concatenation procedure of 6 slot diagrams. We have that $x^{(i)}\,, i=-3,-1,0,1,2$ are the ones illustrated in Fig.~\ref{tantislot} where $x^{(-3)}=x^{(2)}=\emptyset$. The integer labels below the picture represents the coordinates. On the column with label $j$ it is possible to read the values of $\big(\zeta_k(j)\big)_{k\geq 1}$.	\label{conca}	}
\end{figure}

More formally, denoting 
$s^{(i)}_k:= $ number of $k$-slots in $x^{(i)}$. Define
\begin{align}
\label{s11}
S^{(0)}_k=0;\quad S^{(i+1)}_k - S^{(i)}_k= s^{(i)}_k
\end{align}
Consider $\underline x:=\left(x^{(i)}\right)_{i\in \mathbb Z}$ and let $\zeta=\zeta[\ux]$ be defined by
\begin{align}
\label{s12}
\zeta_k(S_k^i + j) = x_k^{(i)}(j),\quad j=0,\dots,s^{(i)}_k-1\,;\ k\geq 1\,;\ i\in \mathbb Z\,.
\end{align}
It is not hard to see that the $\zeta$ so constructed is the decomposition of the configuration $\eta$ whose excursions have slot diagrams $x^{(i)}$:
\begin{align}
\label{s22}
\zeta\Big(\ux\big[\underline \vep [\eta]\big]\Big)=D\eta,
\end{align}
where $\ux\big[\underline \vep [\eta]\big]$ denotes the slot diagrams $\left(x\big[\vep^{(i)} [\eta]\big]\right)_{i\in \mathbb Z}$.
\subsubsection{From components to slot diagrams}
\label{s5.1} We explain now how to construct a family of slot diagrams starting from an array $\zeta\in\cZ$, that is, with the property $\sup\{k\ge 0: \zeta_k(j)\}<\infty$ for all $j\in\Z$.
In Fig.~\ref{fromzetatoslot} we show a portion of the infinite array $\zeta$ and discuss how to generate the slot diagrams $x^{(i)}$, $i\geq 0$. 
 \begin{figure}
	
	\centering
	\includegraphics{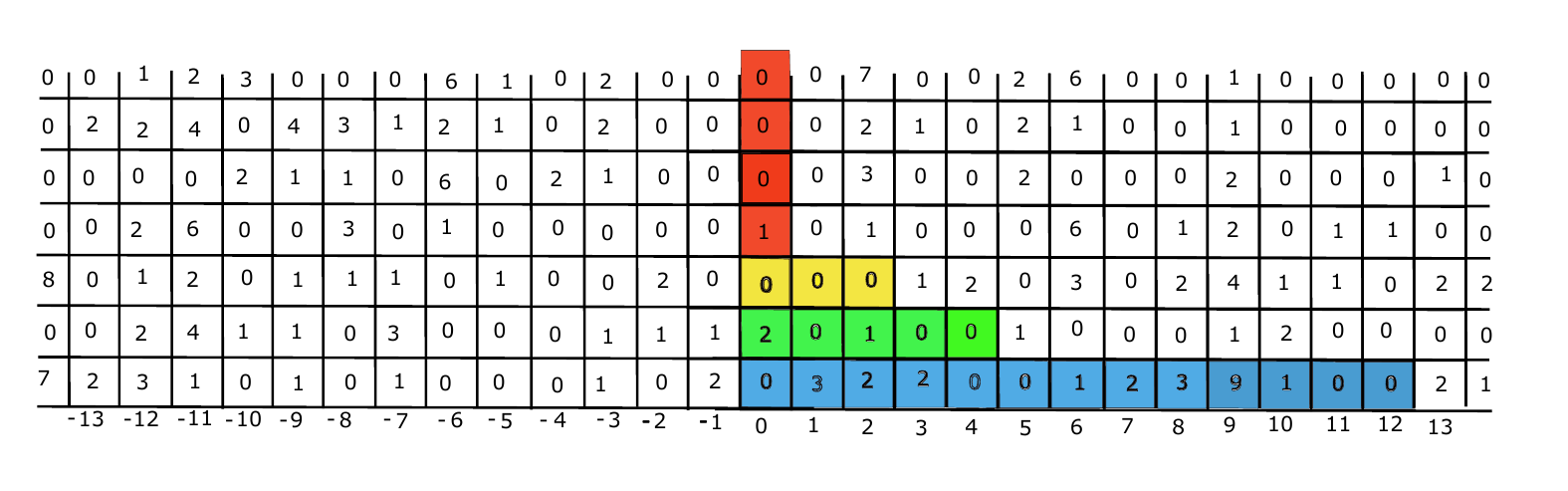}	\includegraphics{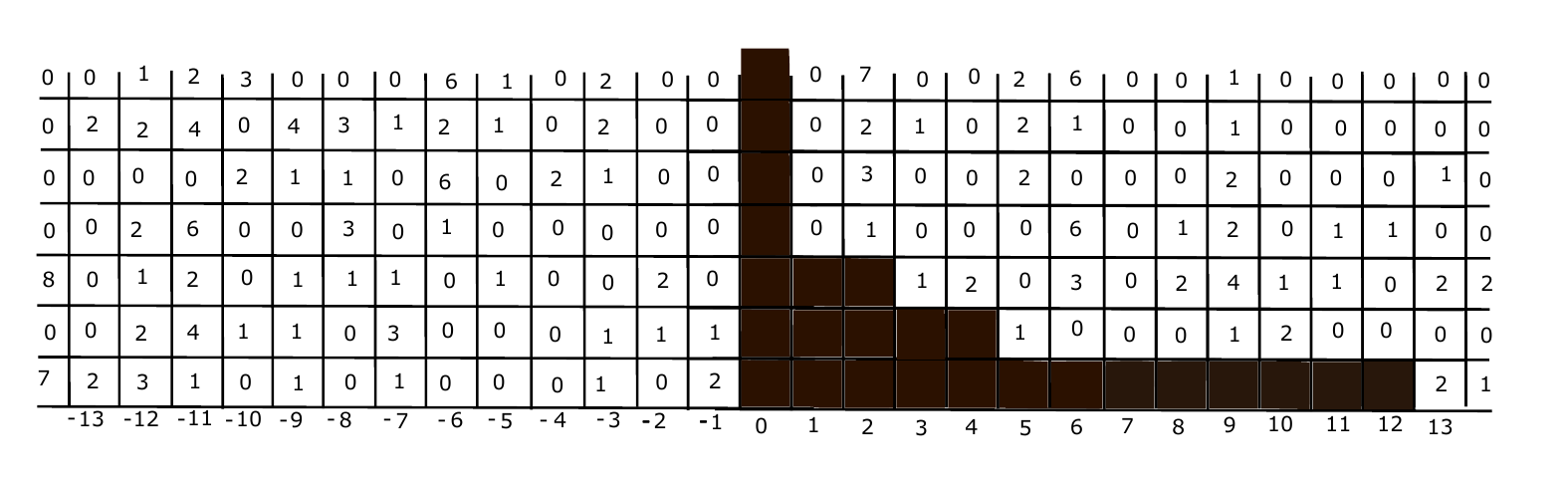}
	\includegraphics{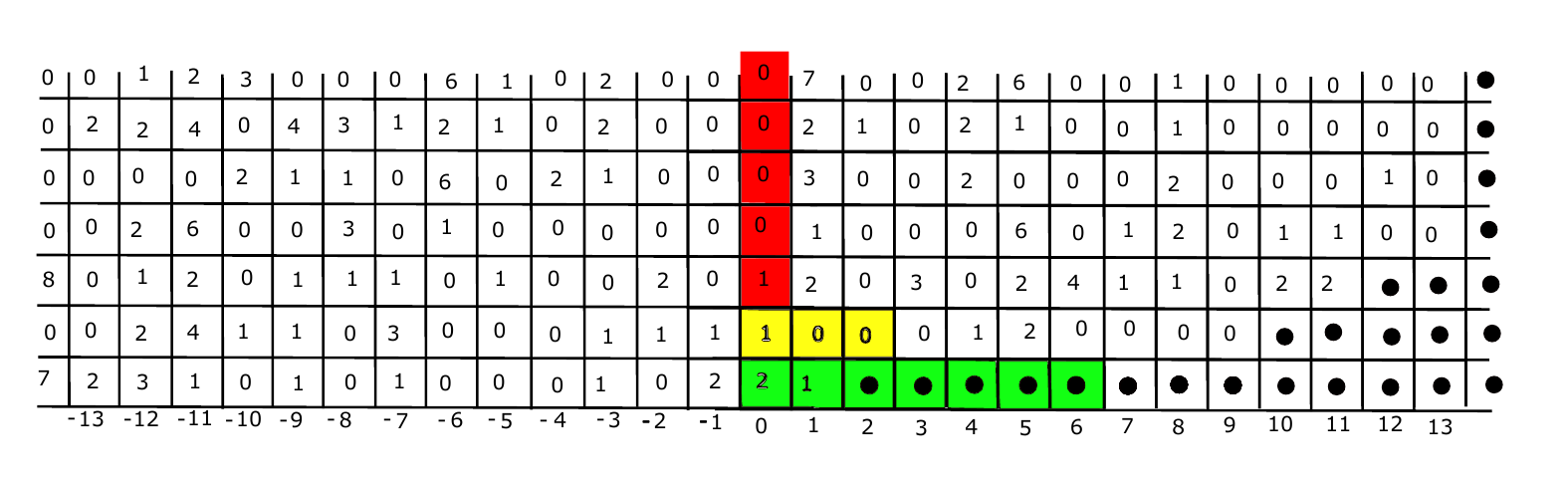}
	
	\captionof{figure}{Construction of the slot diagrams $x^{(i)}$, $i=0,1$, from an infinite array $\zeta$. In the first picture from above the slot diagram $x^{(0)}$ consists of the colored region. The squares has been added to the diagram in this order: red, yellow, green and blue. In the middle picture the squares corresponding to $x^{(0)}$ are black colored and then removed. In the bottom figure the lines have been shifted to the left to fill the empty spaces. Squares previouly outside of the picture are drawn with a $\bullet$ inside. The slot diagram $x^{(1)}$ corresponds now to the colored region with squares added following the same order as before. \label{fromzetatoslot}	}
\end{figure}

In the first step (top picture) we search for the maximal row in column $0$ such that the corresponding value is strictly positive. We color by red the square, add it to the slot diagram $x^{(0)}$ and set $M\left(x^{(0)}\right)=4$. Then we compute $s_3^{(0)}$ using \eqref{sk2}, color by yellow a corresponding number of squares in the row $3$ and add them to the slot diagram $x^{(0)}$. Now we compute $s_2^{(0)}$ again using \eqref{sk2}, color a corresponding number of squares in the row $2$ by green and add them to the slot diagram $x^{(0)}$. Finally compute $s_1^{(0)}$, color by blue a corresponding number of squares on the first line and add them to the slot diagram $x^{(0)}$. The final slot diagram number zero $x^{(0)}$ consists of all the colored region.

To construct slot diagram number 1, erase all colored boxes and shift the non erased region of each positive row to the left, until we have again an array. This is illustrated in the middle and bottom picture of Fig.~\ref{fromzetatoslot}. In the middle picture we colored black those squares to be deleted, while in the bottom picture we shifted the lines to the left. Each line has been shifted by the corresponding number of $\bullet$ appearing on the right. Apply now the algorithm we have used above to identify slot diagram zero and call the result slot-diagram 1. This is illustrated again in the bottom picture of Fig.~\ref{fromzetatoslot} using the same order of the colors. Repeat the procedure to construct the slot diagrams with nonnegative label.

To construct the slot diagrams with negative label, use the same algorithm as for label zero but working from right to left, and the procedure illustrated in Fig.~\ref{fromzetatoslot-meno}.  

\noindent\begin{minipage}{\linewidth}\begin{center}
		\includegraphics{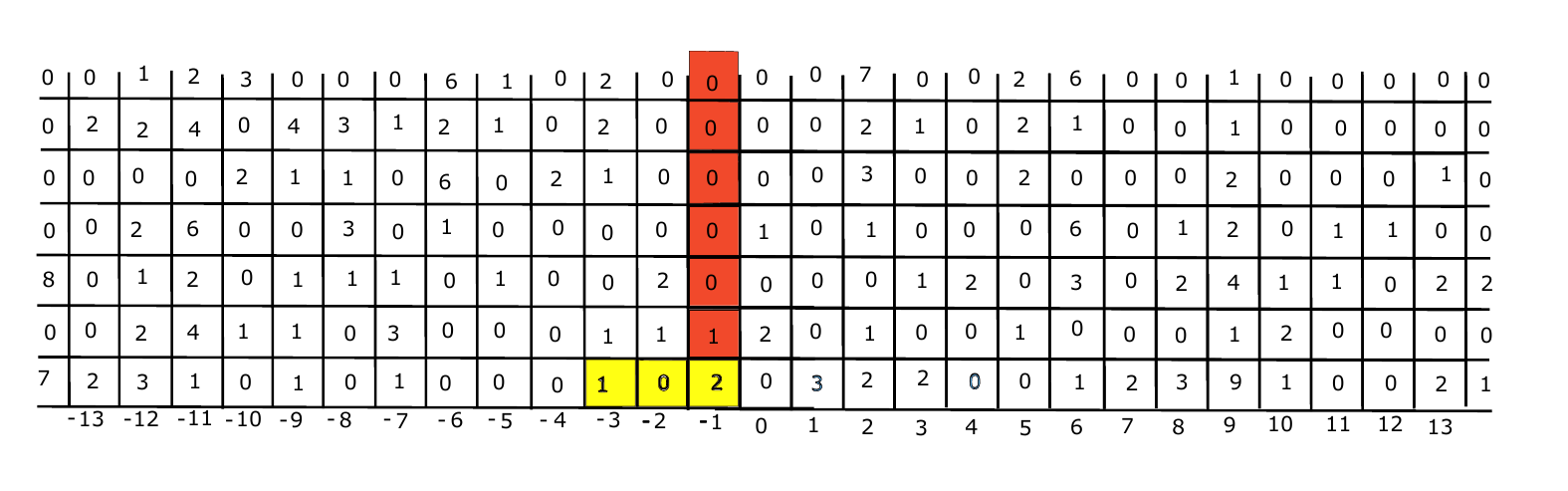}	\includegraphics{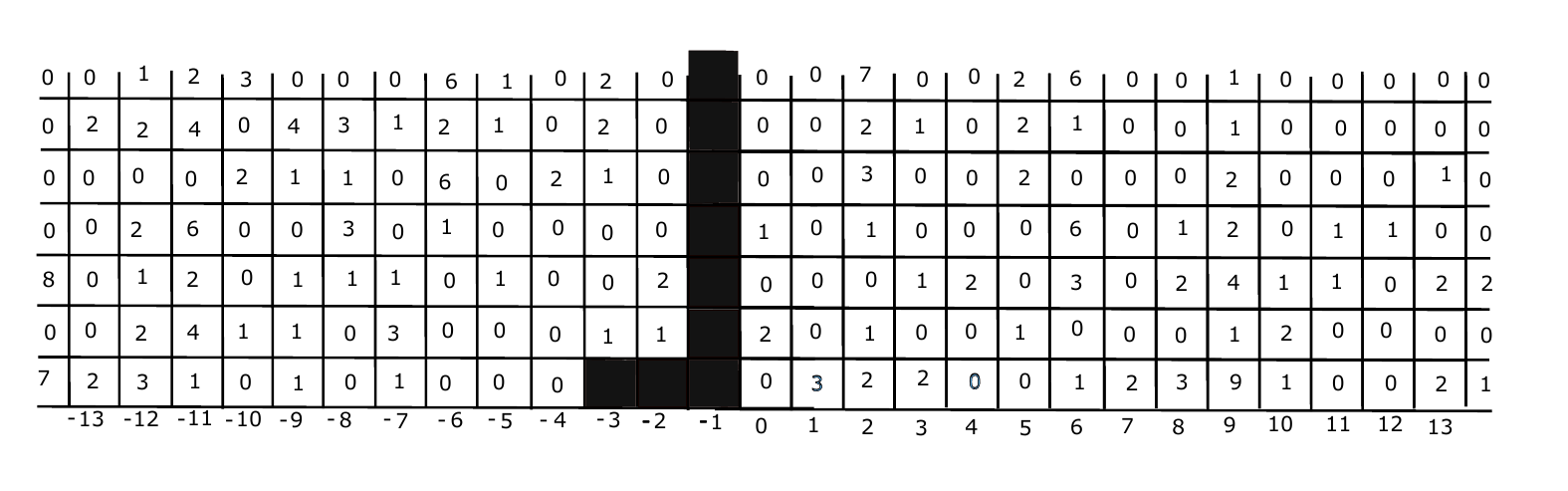}
		\includegraphics{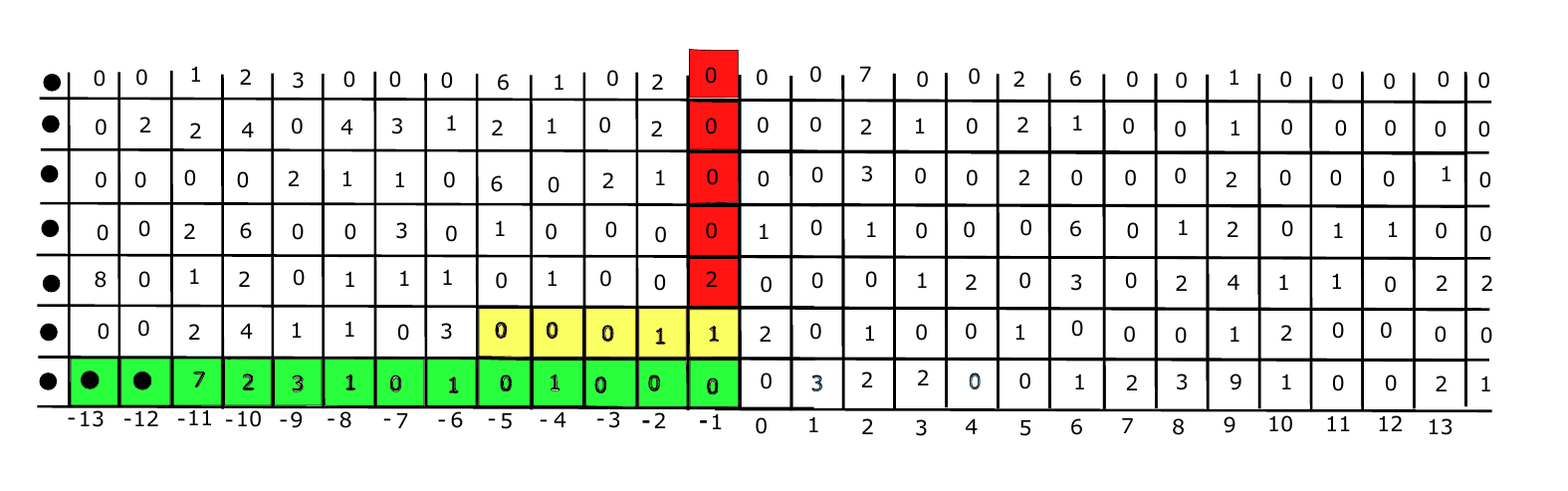}
		
		\captionof{figure}{Construction of the slot diagrams $x^{(i)}$, $i=-1,-2$ from the infinite array $\zeta$. The  slot diagrams consist of the colored region added according the same rules as before. On the top picture we have $x^{(-1)}$, on the middle picture we color black the squares to be removed, on the bottom picture we shift to the right the rows and construct $x^{(-2)}$.\label{fromzetatoslot-meno}	}
\end{center}\end{minipage}

Finally the slot diagrams produced by the above iterations of the algorithm are the following
\begin{figure}
	\centering

		\includegraphics{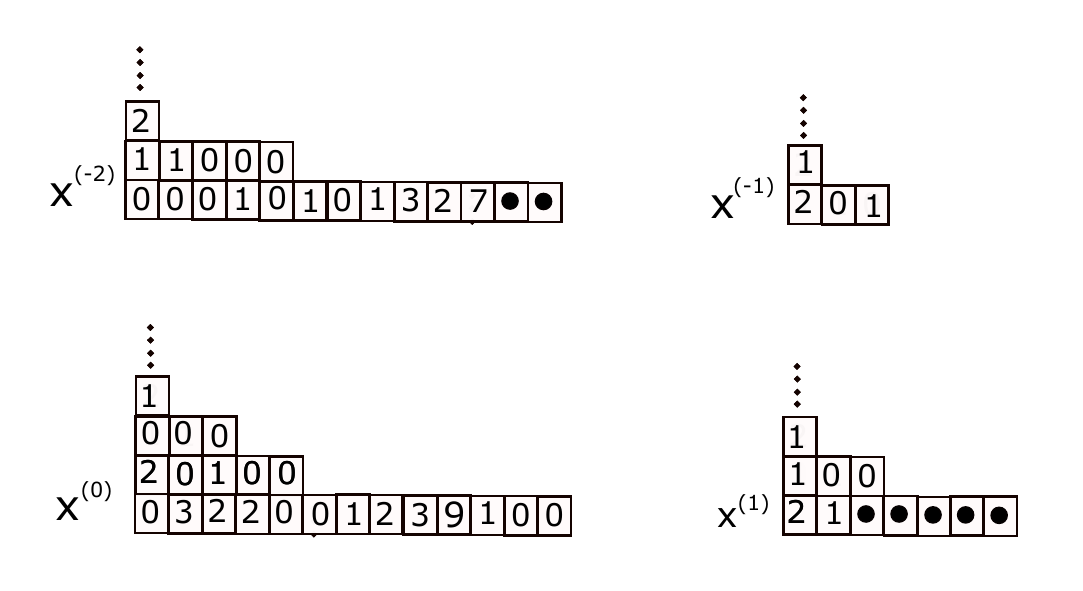}	
		
		\captionof{figure}{The four slot diagrams produced by the iteration of the algorithm in Figures \ref{fromzetatoslot} and \ref{fromzetatoslot-meno}. \label{tanti-slot-dopo}	}

\end{figure}


This construction is formally described as follows. Let $\zeta =\big((\zeta _{k}(j))_{j\in\Z}\big)_{k\geq 1}$ belong to $\cZ$. We construct a slot-diagram $x=x[\zeta]$ as follows. Set
\begin{align}
  \label{m<infty}
  M(x) := \sup\{k\ge 0: \zeta _{k}(0)>0\}<+\infty\,,
\end{align}
a bounded nonnegative integer. Call $m=M(x)$ and set
\begin{align}
  s_k&=1, \hbox{ for }k\ge m
    ,\;  \nn\\
  x_k(0)&= 0, \hbox{ for }k> m, \nn\\
  x_m(0)&= \zeta _m(0).
\end{align}
Assume $\big(x_\ell(0),\dots,x_\ell(s_\ell-1)\big)$ is known for $\ell>k$ and iteratively define
\begin{align}
     |x_\ell|&= \sum_{j=0}^{s_\ell-1} x_\ell(j),\nn\\
  s_k &= 1+ 2\sum_{\ell>k} (\ell-k) |x_\ell|,\nn\\
  x_k(j) &= \zeta _k(j), \quad j=0,\dots,s_k-1.
\end{align}
We have constructed a slot diagram
\begin{align}
  \label{xki}
  x:=\big(x_k(0),\dots,x_k(s_k-1)\big)_{k\ge1}.
\end{align}
Write $x[\zeta]$ and $s_k[\zeta]$ to stress that $x$ and $s_k$ are functions of $\zeta$ and define the hierarchical translation
\begin{align}\label{tr}
   \phi \zeta  = (\tau^{s_k[\zeta]} \zeta _k)_{k\ge1}\,.
\end{align}
The coordinate $s_k[\zeta]$ is the leftmost positive coordinate of $\zeta_k$ not used in the construction of $x[\zeta ]$. We stress that the translation $\tau^{s_k[\zeta]}$ in \eqref{tr} acts on the index labeling the slots, more precisely
$$
(\phi \zeta )_k(j) =\tau^{s_k[\zeta]} \zeta _k(j)=\zeta_k(j+s_k[\zeta])\,.
$$
Since $s_k=1$ for all $k\ge m$, we have  $(\phi\zeta)_k(j)=\zeta_k(j+1)$ for all $k\ge m$. Hence, since  $\zeta $  belongs to the set \eqref{zeta11}, so does $\phi\zeta$ and we can define iteratively
\begin{align}
  \label{xi68}
  x^{(i)} :=    x[\phi^i\zeta ], \quad i\ge0.
\end{align}
For negative $i$ let $\zeta '$ be the reflection of $\zeta $ with respect to the origin translated by $-1$: $\zeta '(j):=\zeta (-j-1)$ for $j\in\Z$ and define
\begin{align}\label{rsd}
  x^{(i)} := (x[\phi^{-i-1}\zeta'])', \quad i<0\,,
\end{align}
that is, construct the slots diagrams for $\zeta '$, reflect the obtained slot diagrams, assign the reflected slot diagram of $0$ to $-1$ and so on. In \eqref{rsd} for a slot diagram $x$ we defined the reflected one $x'$ by $x'_k(j)=x_k(s_k-j-1)$. The corresponding excursions are then given by
\begin{align}
  \vep^{(i)}:= \vep[x^{(i)}],\quad \uvep=(\vep^{(i)})_{i\in\Z}.
\end{align}
\begin{lemma}[FNRW]
  \label{d-1}
  The configuration $\eta=\eta[\uvep]$ satisfies $D\eta=\zeta $.
\end{lemma}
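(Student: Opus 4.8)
The plan is to reduce the equality $D\eta=\zeta$ to the purely combinatorial statement that the concatenation procedure of \S3.1.1 inverts the component-to-slot-diagram construction \eqref{tr}--\eqref{rsd}, and then to invoke the bijections already recorded in the excerpt. As a preliminary, I would note that the construction is well posed: since $\phi$ maps $\cZ$ into $\cZ$ (observed just before \eqref{xi68}), each $\phi^i\zeta\in\cZ$, so $M(x[\phi^i\zeta])<\infty$ by \eqref{m<infty} and every $x^{(i)}=x[\phi^i\zeta]$ (resp.\ its reflection in \eqref{rsd}) is a genuine slot diagram; hence $\vep^{(i)}=\vep[x^{(i)}]$ is a finite excursion and $\eta=\eta[\uvep]$ has infinitely many records on both sides, so $\eta\in\hcX$.

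For the reduction, recall that $\eta=\eta[\uvep]$ and that $\eta\mapsto\uvep[\eta]$ is a bijection between $\hcX$ and a subset of $\cE^\Z$, whence $\uvep[\eta]=(\vep^{(i)})_{i\in\Z}$; recall also that $\cE$ is in bijection with $\cS$ through $\vep\mapsto x[\vep]$ and $x\mapsto\vep[x]$, whence $x[\vep^{(i)}]=x[\vep[x^{(i)}]]=x^{(i)}$. Feeding these identities into \eqref{s22} yields $D\eta=\zeta\big[(x^{(i)})_{i\in\Z}\big]$, the concatenation defined in \eqref{s11}--\eqref{s12}. It therefore remains only to prove $\zeta\big[(x^{(i)})_{i\in\Z}\big]=\zeta$.

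The heart of the argument is a bookkeeping identity. Writing $s_k^{(i)}:=s_k[\phi^i\zeta]$ and taking $S_k^i$ as in \eqref{s11}, I claim that
\[
(\phi^i\zeta)_k(j)=\zeta_k(S_k^i+j)\qquad\text{for all }j,\ k\ge1,\ i\ge0,
\]
which I would establish by induction on $i$, the step being immediate from the definition \eqref{tr} of $\phi$ as the row-wise shift $\tau^{s_k[\,\cdot\,]}$ together with $S_k^{i+1}=S_k^i+s_k^{(i)}$. Since constructing $x[\,\cdot\,]$ merely reads off the first $s_k^{(i)}$ entries of each row, this gives $x^{(i)}_k(j)=\zeta_k(S_k^i+j)$ for $0\le j<s_k^{(i)}$, which is exactly \eqref{s12}; the slot counts match because both constructions compute $s_k$ from the $|x_\ell|$, $\ell>k$, via the same recursion \eqref{sk2}. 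For $i<0$ I would repeat the computation on the reflected array $\zeta'(j)=\zeta(-j-1)$ and undo the reflection $x'_k(j)=x_k(s_k-j-1)$ of \eqref{rsd}; a short index chase recovers the same relation. Finally, every slot diagram contains at least its record slot, so $s_k^{(i)}\ge1$ and $S_k^i$ is strictly increasing in $i$ with $S_k^i\to\pm\infty$; thus the intervals $[S_k^i,S_k^{i+1})$ partition $\Z$, and \eqref{s12} pins down each coordinate $\zeta_k(j)$ exactly once, giving $\zeta[(x^{(i)})]=\zeta$ and hence $D\eta=\zeta$.

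The step I expect to be most delicate is the negative-index bookkeeping: one must verify that the reflection in \eqref{rsd} aligns the record slot shared by two consecutive excursions consistently, counting it once as slot $0$ of the right diagram and as slot $s_k$ of the left one, so that no coordinate of $\zeta_k$ is skipped or read twice at the origin or across the $i=-1,0$ boundary. Everything else is a direct unwinding of the definitions once the shift identity above is in place.
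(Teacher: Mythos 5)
Your proposal is correct as far as it goes, but it is worth being clear about what it does and does not prove, because the paper itself offers no proof of this lemma: it simply points to \S2.3 of \cite{FNRW}. Your argument is a clean reduction of the statement to two ingredients that the paper asserts without proof, namely the bijections $\cE\leftrightarrow\cS$ and $\hcX\leftrightarrow$ (a subset of) $\cE^\Z$, and above all the identity \eqref{s22}, which says that $D\eta$ coincides with the concatenation of the slot diagrams of the excursions of $\eta$. Granting \eqref{s22}, your bookkeeping is sound: the shift identity $(\phi^i\zeta)_k(j)=\zeta_k(S_k^i+j)$ follows by the induction you describe, the negative-index chase through $\zeta'_k(j)=\zeta_k(-j-1)$ and $x'_k(j)=x_k(s_k-j-1)$ does come out right (for $i=-1$ one gets $x^{(-1)}_k(j)=\zeta_k(j-s_k^{(-1)})=\zeta_k(S_k^{-1}+j)$, and similarly deeper), and $s_k^{(i)}\ge1$ gives the partition of $\Z$ by the blocks $[S_k^i,S_k^{i+1})$. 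So you have correctly shown that the extraction-then-concatenation map is the identity on $\cZ$. What you have \emph{not} shown is \eqref{s22} itself, and that is where the actual interaction between the Takahashi--Satsuma algorithm on the infinite configuration and the excursion-wise slot diagrams lives: one must check that solitons of $\eta$ never straddle a record, that the $k$-slots of $\eta$ are exactly the translated $k$-slots of its excursions in the stated order, and that the ``appended to'' relation \eqref{appended1} localizes to a single excursion. The paper declares this ``not hard to see'' and your proof inherits that declaration; since this is precisely the content that FNRW's \S2.3 supplies, your argument is best read as reducing the lemma to \eqref{s22} rather than as a full replacement for the cited proof. A minor additional point: concluding $\eta\in\hcX$ from ``infinitely many records on both sides'' is slightly loose, since $\hcX$ as defined requires an asymptotic density below $\tfrac12$; this is a gap in the paper's own bookkeeping as much as in yours, and is harmless for the combinatorial identity, but you should not claim membership in $\hcX$ for arbitrary $\zeta\in\cZ$ without either weakening the domain of $D$ or adding a hypothesis.
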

See \S2.3, ``Reconstructing the configuration from the components'' in \cite{FNRW} for a proof of this Lemma. This implies that $D$ is a bijection between $\cup_{\lambda < 1/2}\hcX_\lambda$ and $\cZ$ and we can write $\eta=D^{-1}\zeta$.

\subsection{Measures on ball configurations and soliton components}
\label{ac11}
We here define distributions on arrays of components and ball configurations starting with independent families of iid excursions and vice-versa.

\paragraph{Palm measures} We consider configurations with all records and the underlying point process of the records. Start reminding the definition of Palm and anti-Palm measures, see Chapter 8 of Thorisson \cite{thorisson} for background and proofs of the following facts, which are stated with respect to the point process of the records.

Let $\mu$ be a translation invariant measure on $\cX$ and define $\lambda=\lambda(\mu):=\int \eta(0)\,\mu(\dd\eta)$ its mean density; the density of records is then $1-2\lambda$. Define the measure $\Palm(\mu)$ on $\hcX$ by acting on test functions $f$ by
\begin{align}
  \int f(\eta)\Palm(\mu)(\dd\eta) = \frac1{1-2\lambda} \int \one\{0\in R\eta\} f(\eta)\mu(\dd\eta).
\end{align}
This is the measure $\mu$ conditioned to have a record at the origin. $\Palm(\mu)$ is record-translation invariant:
\begin{align}
  \int f(\eta)\Palm(\mu)(\dd\eta)= \int f(\tau^{r(\eta,i)}\eta)\Palm(\mu)(\dd\eta),\quad\text{for all record }i.
\end{align}

Reciprocally, for a record-translation invariant measure $\hmu$ on $\hcX$ with finite average inter-record distance
\begin{align}\label{cappa}
 \kappa(\hmu)  :=\int r(\eta,1) \hmu(\dd\eta) \in[1,\infty),
\end{align}
define the \emph{anti-Palm} measure $\Palm^{-1}(\hmu)$ acting on test functions $f$ as
\begin{align}
  \label{anti-palm}
 \Palm^{-1}(\hmu) f:= \frac1{\kappa(\hmu)}\int \sum_{z=1}^{r(\eta,1)} f(\tau^z\eta)\, \hmu(\dd\eta).
\end{align}
The measure $\mu:=\Palm^{-1}(\hmu)$ is translation invariant and has mean ball density
\begin{equation}\label{lambda}
  \lambda(\mu) = \frac{\kappa(\hmu)-1}{2\kappa(\hmu)}\in[0,\textstyle{\frac12})\,,
\end{equation}
indeed $\frac12 (\kappa(\hmu)-1)$ is the mean number of balls per excursion, that is, between two successive records  and $\kappa(\hmu)$ is the mean distance between successive records. 
There are record-translation invariant measures $\hmu$ with infinite average inter-record distance, but concentrating on the set of configurations with all records finite. The anti-Palm transformation of those measures is not defined.

The next proposition proven by FNRW says that random arrays in $\cZ$ with translation invariant distribution and independent components produce record-translation invariant distributions on the space of ball configurations. 
\begin{proposition}[FNRW, Independent components and Palm measures]
  \label{p88}
  Let $\zeta$ be a random array with translation invariant distribution concentrating on $\cZ$ and satisfying $(\zeta_k)_{k\ge 1}$ independent. Then the law of $D^{-1}\zeta$, denoted by $\hmu$, is record-translation invariant. Furthermore, if $\sum_k kE\left[\zeta_k(0)\right]<\infty$, then the inter-record distance under $\hmu$ is finite and the measure $\Palm^{-1}(\hmu)$ is translation invariant and concentrates on $\cX$.
\end{proposition}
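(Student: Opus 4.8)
The plan is to route both assertions through the hierarchical translation $\phi$ of \eqref{tr}, which under the bijection $D$ implements advancing a configuration by exactly one record. Concretely, I would first verify the identity $D(\tau^{r(\eta,1)}\eta)=\phi(D\eta)$: deleting excursion $0$ from $\eta\in\hcX$ removes precisely the $s_k[\zeta]$ leftmost $k$-slots of each component, which by \eqref{s12} is the shift $\zeta_k\mapsto\tau^{s_k[\zeta]}\zeta_k$ defining $\phi$, the negative-index bookkeeping being handled by the reflection rule \eqref{rsd}. Granting this, record-translation invariance of $\hmu$ becomes equivalent to invariance of the law of $\zeta$ under $\phi$, because iterating the one-record shift together with its inverse generates all record shifts.

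The heart of the matter is thus the claim $\phi\zeta\overset{d}{=}\zeta$. First I would observe that joint translation invariance and the independence of $(\zeta_k)_{k\ge1}$ force each $\zeta_k$ to be individually translation invariant, since the laws of $(\tau\zeta_k)_k$ and of $(\zeta_k)_k$ are both the product of their marginals. The decisive structural point is that the amount $s_k[\zeta]$ by which $\phi$ shifts component $k$ is, from \eqref{sk2} and the recursive construction of \S\ref{s5.1}, a measurable function of the strictly higher components $(\zeta_\ell)_{\ell>k}$ alone, and in particular independent of $\zeta_k$. For a cylinder test function $F$ depending only on components $1,\dots,N$ I would then integrate out $\zeta_1,\dots,\zeta_N$ successively in increasing order: at stage $k$ all the shifts $s_k,\dots,s_N$ and all the shifted components $\tau^{s_\ell}\zeta_\ell$ with $\ell>k$ are measurable with respect to $(\zeta_\ell)_{\ell>k}$, so conditioning on $(\zeta_\ell)_{\ell>k}$ leaves $\zeta_k$ as the only source of randomness, entering solely through $\tau^{s_k}\zeta_k$; independence of $\zeta_k$ from $(\zeta_\ell)_{\ell>k}$ and its stationarity give $\tau^{s_k}\zeta_k\overset{d}{=}\zeta_k$, so $\tau^{s_k}\zeta_k$ may be replaced by $\zeta_k$ in law. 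Iterating to $k=N$ and invoking independence of the components at the end yields $E[F(\phi\zeta)]=E[F(\zeta)]$. Since cylinder functions determine the law on $\cZ$, this gives $\phi\zeta\overset{d}{=}\zeta$ and hence the first assertion.

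For the quantitative part I would write the inter-record distance as $r(\eta,1)=2n(\vep^{(0)})+1=1+2\sum_{k\ge1}k\sum_{j=0}^{s_k[\zeta]-1}\zeta_k(j)$, the balls of excursion $0$ counted by soliton order. With $m_k:=E[\zeta_k(0)]$ and $\beta_k:=E[s_k[\zeta]]$, the independence of $s_k[\zeta]$ from $\zeta_k$ and the stationarity of $\zeta_k$ give the Wald-type identity $E\big[\sum_{j=0}^{s_k[\zeta]-1}\zeta_k(j)\big]=\beta_k m_k$, and taking expectations in \eqref{sk2} produces exactly the recurrence \eqref{rer}, namely $\beta_k=1+\sum_{\ell>k}2(\ell-k)m_\ell\beta_\ell$. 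Under the hypothesis $\sum_k k\,m_k=\sum_k kE[\zeta_k(0)]<\infty$, \S3.3 of \cite{FNRW} yields a unique finite solution $(\beta_k)_{k\ge0}$, so that $\kappa(\hmu)=E[r(\eta,1)]=\beta_0<\infty$.

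Finally, once $\kappa(\hmu)<\infty$, the anti-Palm measure $\Palm^{-1}(\hmu)$ of \eqref{anti-palm} is well defined and translation invariant by the standard inversion theory (Thorisson \cite{thorisson}, Ch.~8), with mean density $\lambda=(\kappa(\hmu)-1)/(2\kappa(\hmu))\in[0,\tfrac12)$ from \eqref{lambda}. Since the stationary excursion-length sequence has finite mean $\beta_0-1$, the ergodic theorem gives a strictly positive density of records $\hmu$-almost surely, equivalently ball density strictly below $\tfrac12$; as density is shift invariant, $\Palm^{-1}(\hmu)$ concentrates on $\cX$. I expect the main obstacle to be the proof of $\phi$-invariance: $\phi$ moves each component by a distinct, configuration-dependent and mutually correlated amount, and the entire argument rests on the observation that $s_k[\zeta]$ depends only on the higher components, which is exactly what legitimizes peeling off the components one at a time.
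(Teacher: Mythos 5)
The paper does not actually prove Proposition \ref{p88}: it is imported verbatim from FNRW (the text says ``The next proposition proven by FNRW\dots''), so there is no in-paper proof to compare against. Judged on its own, your plan is correct and is the natural argument; moreover it reuses exactly the machinery this paper does develop. The identity $D(\tau^{r(\eta,1)}\eta)=\phi(D\eta)$ is implicit in \S\ref{s5.1} and Lemma \ref{d-1} (note that $\phi$ shifts each whole bi-infinite row by $s_k[\zeta]$, so the identity already covers negative indices; the reflection rule \eqref{rsd} is only needed to build the negative-label slot diagrams, not for this identity). Your key structural observation --- that $s_k[\zeta]$ is measurable with respect to $(\zeta_\ell)_{\ell>k}$ alone, by downward induction on \eqref{sk2} --- is what makes the peeling argument for $\phi\zeta\overset{d}{=}\zeta$ legitimate, and your quantitative step (Wald plus the recurrence \eqref{rer} and the citation of \S3.3 of FNRW for a finite solution) is precisely the argument the authors themselves use to prove \eqref{pp46} inside Theorem \ref{teonuovo}; the only point you inherit from that citation is that $\beta_k=E[s_k[\zeta]]$ is actually finite rather than merely satisfying the recurrence, which FNRW's monotone argument supplies. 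One small remark: Wald here only needs stationarity of $(\zeta_k(j))_j$ together with independence of $s_k[\zeta]$ from $\zeta_k$, not i.i.d.\ entries, which is exactly what the hypotheses of the proposition provide.
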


We have the following result.

\begin{theorem}[Soliton weights and independent geometric components]\label{teo9}

a) Let $\alpha\in\cA$ and $\uvep=(\vep^{(i)})_{i\in\Z}$ be a sequence of i.i.d.~random excursions with distribution $\nusola$ given by~\eqref{nusola}. Let $\hmu_\alpha$ be the distribution of $\eta=\eta[\uvep]$, the random ball configuration with Record~0 at the origin and excursions $(\vep^{(i)})_{i\in\Z}$, defined in~\eqref{etavep}.  Define $\zeta:= D\eta$, the soliton decomposition of $\eta$, defined in~\eqref{D-def}. Then $\zeta_k(j)$ are independent Geometric$(1-q_k(\alpha))$ random variables, for all $j\in\Z, k\ge1$. 

b) Reciprocally, let $q\in\cQ$ and $\zeta= \big((\zeta_k(j))_{j\in\Z}\big)_{k\ge 1}$ be an array of independent random variables with $\zeta_k(j)$ distributed according to Geometric$(1-q_k)$, for all $j\in\Z$ for all $k\ge 1$. Then $\zeta\in\cZ$ with probability 1 and denoting $\eta:=D^{-1}\zeta$, we have that $\vep^{(i)}[\eta]$ are i.i.d.~excursions with law  $\nu_{\alpha(q)}$, so that $\eta$ has law $\hmu_{\alpha(q)}$, a record-translation invariant measure. 

\end{theorem}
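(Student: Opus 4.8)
The plan is to reduce both statements to a single finite-dimensional computation comparing the two procedures that generate the array $\zeta$: concatenating i.i.d.\ excursions (part a) versus sampling the entries $\zeta_k(j)$ independently and parsing them into slot diagrams by the construction of \S\ref{s5.1} (part b). By Theorem~\ref{teonuovo} the excursion law is $\nu_\alpha=\varphi_{q(\alpha)}$, so throughout I would work with the product form \eqref{ss24}, namely $\varphi_q(\vep)=\prod_{k\ge1}q_k^{|x_k[\vep]|}(1-q_k)^{s_k}$, and show directly that it matches the independent-geometric weight.

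First, for part b), I would check $\zeta\in\cZ$ almost surely: since $P(\zeta_k(0)>0)=q_k$ and $\sum_k q_k<\infty$ for $q\in\cQ$, Borel--Cantelli gives $\sup\{k:\zeta_k(0)>0\}<\infty$ a.s., and the countable union over columns $j\in\Z$ shows $\zeta\in\cZ$ a.s. Hence $D^{-1}\zeta$ and the excursions $\vep^{(i)}[\eta]$ are well defined. Moreover $s_k^{(i)}\ge1$ forces $S_k^{(N)}\to+\infty$ and $S_k^{(-N)}\to-\infty$, so the parsing exhausts every column and produces a bi-infinite family of excursions, putting $\eta=D^{-1}\zeta$ in $\hcX$.

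The key computation is as follows. Fix a finite block of target slot diagrams $\bar x^{(-N')},\dots,\bar x^{(N-1)}$, let $\bar\zeta$ be their concatenation, and let $\bar S_k^{(i)}$ be the cumulative slot counts of \eqref{s11}, so that in row $k$ the block occupies the columns $[\bar S_k^{(-N')},\bar S_k^{(N)})$. Using $|x_k^{(i)}|=\sum_{j}\zeta_k(S_k^{(i)}+j)$ and $(1-q_k)^{\,\text{\#cols}}=\prod_{\text{cols}}(1-q_k)$, the joint law in the i.i.d.-excursion model collapses to
\begin{align}
\prod_{i=-N'}^{N-1}\varphi_q(\bar x^{(i)})
=\prod_{k\ge1} q_k^{\,\sum_i|\bar x_k^{(i)}|}(1-q_k)^{\sum_i \bar s_k^{(i)}}
=\prod_{k\ge1}\ \prod_{\bar S_k^{(-N')}\le j<\bar S_k^{(N)}} q_k^{\bar\zeta_k(j)}(1-q_k).\nonumber
\end{align}
The reflection used for negative indices in \eqref{rsd} is harmless because $\varphi_q$ depends only on $|x_k|$ and $s_k$, both reflection invariant. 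The final product is exactly the probability, in the independent-geometric model, that $\zeta_k(j)=\bar\zeta_k(j)$ on the consumed region. I would then argue that the parser of \S\ref{s5.1} is \emph{local}: slot diagram $i$ reads only columns in $[\bar S_k^{(i)},\bar S_k^{(i+1)})$ of each row $k$, the number read in row $k$ being fixed by the already-determined higher rows and never by peeking ahead. Consequently the event ``the parser outputs $\bar x^{(-N')},\dots,\bar x^{(N-1)}$'' coincides with the event ``$\zeta$ matches $\bar\zeta$ on the consumed columns.'' Thus both models assign equal probability to every finite block of slot diagrams, so the bi-infinite slot-diagram sequences have the same law on $\cE^\Z$; since $\zeta$ is the same deterministic concatenation map of this sequence in both models (this is \eqref{s22} and Lemma~\ref{d-1}, which make parsing and concatenation mutually inverse), the arrays coincide in law. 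As the independent-geometric array has by construction independent $\mathrm{Geometric}(1-q_k)$ entries, this proves a) and b) at once.

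The step I expect to be the main obstacle is making the event-equivalence in the previous paragraph fully rigorous: one must verify that the parser is measurable and strictly local, that the block of consumed columns is itself determined by the output slot diagrams, and that these hold uniformly over the two-sided, random-length decomposition, so that matching all finite blocks genuinely pins down the law on $\cE^\Z$ and hence, via the bijection $D$, on $\cZ$. The algebraic collapse of $\prod_i\varphi_q(\bar x^{(i)})$ to a product of geometric weights is, by contrast, routine once \eqref{ss24} and Theorem~\ref{teonuovo} are in hand.
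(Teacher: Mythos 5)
Your argument is correct, and it reaches the paper's conclusion by a genuinely different organization of the same ingredients. The paper treats the two directions separately: for a) it conditions on the sigma-field generated by the rows of the slot diagrams above level $k$, invokes the conditional form \eqref{ss37}--\eqref{ss38} of $\nu_\alpha=\varphi_{q(\alpha)}$ (Theorem \ref{teonuovo}) to see that, given the slot counts, each $k$-th row of each slot diagram is i.i.d.\ Geometric$(1-q_k(\alpha))$, and then observes that juxtaposing independent such rows via \eqref{s12} yields an i.i.d.\ geometric row whose law does not depend on the conditioning, which gives independence of the components; for b) it runs the parser of \S\ref{s5.1} sequentially, noting that the set of cells consumed by $x^{(0)}[\zeta]$ is a \emph{stopping set} (the event that a given region is consumed with given values depends only on the entries of that region), so that erasing the consumed cells and shifting leaves an independent copy of $\zeta$, whence the slot diagrams are i.i.d.\ $\varphi_q$. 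Your single cylinder computation
$\prod_i\varphi_q\bigl(\bar x^{(i)}\bigr)=\prod_{k}\prod_{j}q_k^{\bar\zeta_k(j)}(1-q_k)$
subsumes both directions at once and makes the equality of the two laws on $\cS^\Z$ completely explicit; the price is that you must justify the event-equivalence between ``the parser outputs the block'' and ``$\zeta$ matches on the consumed region,'' but this is precisely the stopping-set property the paper itself relies on for part b), and your verification sketch (the number of cells read in row $k$ is determined by the already-parsed higher rows, the two sides of the origin are parsed from disjoint column sets, and $s_k^{(i)}\ge1$ guarantees exhaustion of every column) is the right one. Two small points in your favor: your Borel--Cantelli check that $\zeta\in\cZ$ almost surely is a detail the paper's proof of b) leaves implicit, and your observation that the reflection \eqref{rsd} is harmless because $\varphi_q$ depends only on the reflection-invariant quantities $|x_k|$ and $s_k$ correctly handles the negative-index excursions. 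Both proofs lean on Theorem \ref{teonuovo} in the same essential way.
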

\begin{proof}
  a)  Let $x^{(i)}= x[\vep^{(i)}]$ be the slot diagram of the excursion $\vep^{(i)}$. By Theorem~\ref{teonuovo} $x^{(i)}$ satisfies \eqref{ss37} and \eqref{ss38}, that is, given the number of $k$-slots $s^{(i)}_k$, the variables $x^{(i)}_k(0),\dots, x^{(i)}_k(s^{(i)}_k-1)$ are i.i.d.~Geometric$(1-q_k(\alpha))$ random variables. Let $\cF_k$ be the sigma field generated by the $k$-th row $(x^{(i)}_k)_{i\in\Z}$ and denote by $\cF_{>k}$ the sigma field generated by $\big((x^{(i)}_{k+1})_{i\in\Z},(x^{(i)}_{k+2})_{i\in\Z},\dots\big)$, the rows bigger than $k$. Condition on $\cF_{>k}$ and construct $\zeta_k$ using \eqref{s12}, that is juxtaposing the $k$-component of each slot diagram one after the other. Since the excursions are independent, the resulting component $\zeta_k\in(\Z_{\ge0})^\Z$ consists of i.i.d.~Geometric$(1-q_k(\alpha))$ random variables independently of the conditioning.  This implies that $\zeta_k(j)$ are independent Geometric$(1-q_k(\alpha))$ random variables, for all $j\in\Z, k\ge1$, concluding the proof of item a.

b)  It suffices to show that the excursions generated by the slot diagrams $(x^{(i)}[\zeta])_{i\in\Z}$ have marginal law $\nu_{\alpha(q)}$ and are independent. It is immediate from the construction illustrated in Fig.~\ref{fromzetatoslot} (top picture) that $x^{(0)}[\zeta]$ satisfies \eqref{ss36}-\eqref{ss38}. Let the array $\zeta^{(1)}$ obtained by erasing the entries used by $x^{(0)}[\zeta]$ and sliding the remaining entries to the left (Fig.~\ref{fromzetatoslot}). Since the set of erased entries does not depend on de contents of the non-erased entries and the entries in $\zeta$ are independent, $\zeta^{(1)}$ has the same law as $\zeta$ and it is independent of $x^{(0)}[\zeta]$. Then $x^{(1)}[\zeta] = x^{(0)}[\zeta^{(1)}]$, which is independent of the previous slots diagrams. The same argument applies to the construction of the slot diagrams of $\zeta$ with negative label (see Fig.~\ref{fromzetatoslot-meno}). 
\end{proof}

\subsection{Invariant measures for the BBS}
Theorem \ref{t1} below, proven by  FNRW, states that a translation invariant measure whose Palm transform has independent components is invariant for the BBS dynamics. As a consequence of Theorems \ref{t1} and \ref{teo9}, we will conclude that the measure $\mu_\alpha=\Palm^{-1}(\hat\mu_\alpha)$ introduced in Theorem~\ref{teo9} is invariant for the dynamics. 

The BBS dynamics can be described by the operator $T$ acting on configurations $\eta\in\cX$ by
\begin{align}
  T\eta(z) := (1-\eta(z))\,\one\{z\notin R\eta\}.
\end{align}
The configuration $T\eta$ coincides with $\eta$ at the records of $\eta$ and the contents of the other boxes are inverted. Indeed, at each iteration of $T$ the balls in each excursion go to the empty boxes of the same excursion and the record boxes remain empty. In particular, the number of balls and empty boxes of $\eta$ and $T\eta$ between two successive records of $\eta$ are the same. Since the records have a positive density, this implies that density is conserved by $T$: $T\cX_\lambda =\cX_\lambda$ for any $\lambda\in[0,1/2)$ and that $T:\cX\to\cX$  indeed. When $\eta$ has finitely many balls $T\eta$ coincides with the configuration obtained after the carrier has visited all boxes of the configuration $\eta$, as described in the introduction.

We say that a measure $\mu$ is \emph{$T$-invariant}  if $\mu\circ T^{-1}=\mu$. The next theorem of FNRW establishes conditions under which translation invariant measures with independent soliton components are $T$-invariant.
\begin{theorem}[FNRW. Independent components and $T$-invariance]
  \label{t1}
  Let  $\zeta=(\zeta_k)_{k\ge 1}$ be a random array  with translation invariant distribution and independent rows satisfying $\sum_k kE\zeta_k(0)<\infty$. 
  Let $\hmu$ be the law of $D^{-1}\zeta$. Then $\mu:=\Palm^{-1}(\hmu)$ is $T$-invariant.
\end{theorem}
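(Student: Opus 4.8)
The plan is to deduce $T$-invariance of the translation-invariant measure $\mu=\Palm^{-1}(\hmu)$ from $T$-invariance of its record-Palm measure $\hmu$, and to prove the latter by transporting the problem to the soliton components, where the FNRW linearization of the dynamics reduces everything to a statement about shifting independent, translation-invariant rows. I would first record two structural properties of $T$. Since the balls of each excursion are redistributed among the empty boxes of the \emph{same} excursion while the record boxes stay empty, $T$ preserves the record set, $R(T\eta)=R\eta$, so $T$ maps $\hcX$ into itself; and $T$ commutes with translations, $\tau T=T\tau$. From the latter, the pushforward $T_*\mu=\mu\circ T^{-1}$ is again translation invariant with the same density as $\mu$ (since $T\cX_\lambda=\cX_\lambda$), hence the same record intensity $1-2\lambda$. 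By the Palm inversion formula, a translation-invariant measure with positive finite record intensity is the anti-Palm of its Palm measure; so it suffices to prove $\Palm(T_*\mu)=\Palm(\mu)=\hmu$. Using $R(T\eta)=R\eta$, a one-line computation with the definition of $\Palm$ gives the commutation $\Palm(T_*\mu)=T_*\Palm(\mu)=T_*\hmu$, so the whole theorem reduces to showing $T_*\hmu=\hmu$.

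Next I would pass to components through the bijection $D\colon\hcX\to\cZ$. Writing $\hT:=D\,T\,D^{-1}$, the identity $T_*\hmu=\hmu$ becomes: the law of $\zeta$ is invariant under $\hT$. Here I invoke the FNRW linearization quoted in the introduction: the $k$-component of $T\eta$ is a translation of the $k$-component of $\eta$ by an amount depending only on the components of order $>k$. In component coordinates this reads $(\hT\zeta)_k=\tau^{c_k}\zeta_k$ with $c_k$ measurable with respect to $\cF_{>k}:=\sigma(\zeta_{k+1},\zeta_{k+2},\dots)$. I would also note that, because the rows are independent and the array is translation invariant, each individual row $\zeta_k$ is itself translation invariant (the joint law is a product of the row laws, and simultaneous shift invariance forces each factor to be shift invariant). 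The goal is thus $(\tau^{c_k}\zeta_k)_{k\ge1}\stackrel{d}{=}(\zeta_k)_{k\ge1}$.

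The probabilistic heart is this shift invariance, which I would prove by a downward conditioning argument, truncating at level $K$ to sidestep the absence of a largest component. Fix $K$ and condition on $\cF_{>K}$, which is independent of the mutually independent rows $\zeta_1,\dots,\zeta_K$. Processing $k=K,K-1,\dots,1$: at each step $c_k$ is measurable with respect to $\sigma(\cF_{>K},\zeta_{k+1},\dots,\zeta_K)=\cF_{>k}$, hence constant given that $\sigma$-algebra, while $\zeta_k$ is independent of it and translation invariant; therefore $\tau^{c_k}\zeta_k\stackrel{d}{=}\zeta_k$ and remains independent of the already-processed higher rows. This yields $(\tau^{c_k}\zeta_k)_{k=1}^K\stackrel{d}{=}(\zeta_k)_{k=1}^K$ for every $K$, and since the law of an array in $\cZ$ is determined by its marginals on $\{k\le K\}$ as $K\to\infty$, the full equality follows, giving $T_*\hmu=\hmu$ and, by the reduction above, $T_*\mu=\mu$. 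The hypothesis $\sum_k kE\zeta_k(0)<\infty$ enters only to guarantee (via Proposition \ref{p88}) finite inter-record distance, so that $\Palm^{-1}(\hmu)$ and the inversion formula are legitimate.

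I expect the \emph{main obstacle} to be exactly this probabilistic core: the shifts $c_k$ are random and strongly correlated, since they all read off the shared higher components, and there is no maximal component to anchor an induction. The resolution is to condition on $\cF_{>K}$ and exploit that $c_k$ depends only on \emph{strictly} higher rows, so that it freezes under the conditioning while the independent, translation-invariant row $\zeta_k$ absorbs the shift; letting $K\to\infty$ then closes the argument. This is precisely where the two hypotheses (independence of rows and translation invariance of the array) are used in an essential way. A secondary point requiring care is the clean commutation of $\Palm$ with $T_*$, which hinges entirely on the record-preservation $R(T\eta)=R\eta$.
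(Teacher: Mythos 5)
First, a point of comparison: the paper does not prove Theorem \ref{t1} at all --- it is imported from \cite{FNRW} and used as a black box --- so there is no in-paper proof to measure your argument against. Judged on its own terms, your proposal has a genuine gap, and it sits exactly at the step you treated as a ``one-line computation.'' The claim $R(T\eta)=R\eta$ is false. Take $\eta$ consisting of a single $2$-soliton, $\eta(1)=\eta(2)=1$ and $\eta(z)=0$ otherwise, so that $R\eta=\Z_{\le 0}\cup\Z_{\ge 5}$. Then $T\eta(3)=T\eta(4)=1$ and $T\eta(z)=0$ otherwise, and the walk $W(T\eta)$ attains new minima at $z=1,2$ but not at $z=5,6$; hence $R(T\eta)=\Z_{\le 2}\cup\Z_{\ge 7}\neq R\eta$. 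The paper only asserts that the record \emph{boxes} of $\eta$ remain empty in $T\eta$, not that they remain records: solitons overtake records, so the record point process is genuinely displaced by the dynamics. Consequently $T$ does not map $\hcX$ into itself (translate the example so that box $5$ sits at the origin), the operator $\hT=D\,T\,D^{-1}$ is not defined as written (since $D$ requires a record at the origin), and the commutation $\Palm(T_*\mu)=T_*\Palm(\mu)$ fails --- indeed $T_*\hmu$ is not even supported on $\hcX$. Your reduction of the theorem to ``the law of $\zeta$ is $\hT$-invariant'' therefore collapses at the outset.

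The probabilistic core of your argument --- conditioning downward from level $K$, using that each shift $c_k$ is $\cF_{>k}$-measurable while the independent, translation-invariant row $\zeta_k$ absorbs it --- is sound and is indeed the right idea for the component-level part of the FNRW proof. What is missing is precisely the bookkeeping you bypassed: one must re-anchor $T\eta$ at a record of $T\eta$ rather than of $\eta$, express the displacement of the anchor in terms of the higher components (the FNRW linearization already builds this re-anchoring into the statement that $D_kT\eta$ is a translate of $D_k\eta$), and then run a Palm-inversion argument comparing two record point processes that have the same intensity $1-2\lambda$ but are not equal as sets. Without that step the proof is incomplete, and it is not a cosmetic omission: handling the moving records is the part of the argument that makes this theorem nontrivial.
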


We have proven in Theorem \ref{teo9} that for $\alpha\in\cA$ the measure obtained by concatenating i.i.d.~copies of excursions with law  $\nu_\alpha$ has independent components. Applying then Theorem \ref{t1} we conclude in Theorem \ref{t2} below that if $\alpha\in \mathcal A^+$ this measure is the Palm measure of a $T$-invariant measure. As particular cases, we deduce in Corollaries \ref{c3} and \ref{c4} that product measures and stationary Markov chains in $\{0,1\}$ with density of balls less than $\frac12$ are $T$-invariant, a fact proven in \cite{FNRW} and \cite{CroydonKatoSasadaTsujimoto17} using classical arguments and reversibility properties of queues.

We now show that if $\alpha\in\cA^+$, then  $\mu_\alpha$ is $T$-invariant and that if $q\in\cQ^+$, then $\mu_{\alpha(q)}$ is $T$-invariant. When $\alpha\in \mathcal A^+$ we have
\begin{equation}\label{inpiu}
\kappa(\alpha):=\kappa(\hmu_\alpha)= 1+2\sum_k k\rho_k(\alpha)<\infty\,,
\end{equation}
where $\kappa(\hmu)$ is defined in \eqref{cappa}. We define also $\lambda(\alpha):=\lambda(\mu_\alpha)$ where $\lambda(\mu)$ is defined in \eqref{lambda}.
\begin{theorem}[$\mu_\alpha$ is $T$-invariant]\label{t2} 
	
a) Assume the conditions of Theorem \ref{teo9}.
  If  $\alpha\in\cA^+$, then $\hmu_\alpha$ concentrates on $\widehat\cX_{\lambda(\alpha)}$ and it is record-translation invariant and the measure $\mu_\alpha:=\Palm^{-1}(\hmu_\alpha)$ concentrates on $\cX_{\lambda(\alpha)}$, it is translation invariant and $T$-invariant.

\smallskip 

b) If  $q\in\cQ^+$, then $\hmu_{\alpha(q)}$ concentrates on $\widehat\cX_{\lambda(\alpha(q))}$ and $\mu_{\alpha(q)}:=\Palm^{-1}(\hmu_{\alpha(q)})$ is translation invariant, concentrates on $\cX_{\lambda(\alpha(q))}$ and it is $T$-invariant.
\end{theorem}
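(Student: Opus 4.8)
The plan is to assemble Theorems \ref{teonuovo}, \ref{teo9}, \ref{t1} and Proposition \ref{p88}, since the analytic content has already been isolated in those results; what remains is to verify the hypotheses of the cited statements and to pin down the density. I would prove part a) directly and reduce part b) to it.

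For part a) I would fix $\alpha\in\cA^+$ and set $q=q(\alpha)$. By the equivalence \eqref{pp46} of Theorem \ref{teonuovo} this is the same as $q\in\cQ^+$, i.e. $\sum_k kq_k<\infty$. Theorem \ref{teo9}a then tells us that the array $\zeta=D\eta$ has independent rows and that each $\zeta_k(j)$ is Geometric$(1-q_k)$, hence i.i.d.\ along $j$; in particular the law of $\zeta$ is translation invariant with independent components. The mean of each entry is $E[\zeta_k(0)]=q_k/(1-q_k)$, and since $\sum_k q_k<\infty$ forces $q_k\to0$, the tail of $\sum_k kE[\zeta_k(0)]$ is comparable to $\sum_k kq_k$, so $\sum_k kE[\zeta_k(0)]<\infty$ exactly when $q\in\cQ^+$. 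This is the single summability check on which everything downstream depends.

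With this verified, Proposition \ref{p88} immediately gives that $\hmu_\alpha$, the law of $D^{-1}\zeta=\eta$, is record-translation invariant, that the inter-record distance is finite, and that $\mu_\alpha=\Palm^{-1}(\hmu_\alpha)$ is translation invariant and concentrated on $\cX$. Theorem \ref{t1}, applied to the same array $\zeta$, yields that $\mu_\alpha$ is $T$-invariant. It remains to identify the density. Here I would use that under $\hmu_\alpha$ the configuration is a concatenation of i.i.d.\ excursions $\vep^{(i)}$ with law $\nu_\alpha$ separated by records; each excursion of length $2n$ carries $n$ balls and contributes $2n+1$ boxes (the $+1$ for the following record). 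Since $\alpha\in\cA^+$ the mean excursion length $E[2n(\vep)]=\sum_k 2k\rho_k(\alpha)=\kappa(\alpha)-1$ is finite by \eqref{inpiu}, so the strong law of large numbers for the partial sums of i.i.d.\ lengths and ball-counts gives, $\hmu_\alpha$-a.s., a well-defined density $E[n(\vep)]/E[2n(\vep)+1]=(\kappa-1)/(2\kappa)=\lambda(\alpha)$, matching \eqref{lambda}. Thus $\hmu_\alpha$ concentrates on $\widehat\cX_{\lambda(\alpha)}$ and, transporting along the anti-Palm map, $\mu_\alpha$ concentrates on $\cX_{\lambda(\alpha)}$.

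Part b) reduces to part a). For $q\in\cQ^+$ the equivalence \eqref{pp46} gives $\alpha(q)\in\cA^+$, and Theorem \ref{teo9}b shows that the array of independent Geometric$(1-q_k)$ variables produces, via $D^{-1}$, i.i.d.\ excursions with law $\nu_{\alpha(q)}$, so its law is precisely $\hmu_{\alpha(q)}$; applying part a) with $\alpha=\alpha(q)$ finishes the argument. The only genuinely new step beyond citing the earlier results is the law-of-large-numbers identification of the density; the rest is a matter of checking that the summability hypothesis $\sum_k kE[\zeta_k(0)]<\infty$ coincides with membership in $\cA^+$ (equivalently $\cQ^+$), which is exactly what Theorem \ref{teonuovo} provides. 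I expect the density concentration to be the main point requiring care, although it is routine once finiteness of the mean excursion length is in hand.
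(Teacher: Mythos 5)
Your proof is correct and follows essentially the same route as the paper: verify that $\sum_k kE[\zeta_k(0)]<\infty$ is equivalent to $q(\alpha)\in\cQ^+$ (hence to $\alpha\in\cA^+$ via Theorem \ref{teonuovo}), then invoke Proposition \ref{p88} for record-translation invariance and well-definedness of $\Palm^{-1}$, and Theorem \ref{t1} for $T$-invariance. The only difference is that you spell out the law-of-large-numbers identification of the density $\lambda(\alpha)$, which the paper leaves implicit in the definition \eqref{lambda} and the finiteness of $\kappa(\alpha)$ from \eqref{inpiu}; this is a harmless and correct addition, not a different argument.
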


Next corollaries prove that product measures on $\cX$ and stationary trajectories on $\cX$ of Markov chains on $\{0,1\}$ may be expressed as $\mu_\alpha$ of Theorem \ref{t2}, by choosing the appropriate~$\alpha$ and/or $q$. In particular those measures are $T$-invariant, a fact already proven by using reversibility of those trajectories by \cite{CroydonKatoSasadaTsujimoto17} and \cite{FNRW}.

\begin{corollary}[Product measures]
  \label{c3}
  Let $\lambda\in[0,\frac12)$ and $\pi_\lambda$ be the product measure on $\cX$ with density $\lambda$. Let $\hpi_\lambda:=\Palm(\pi_\lambda)$ and $\eta$ be distributed with $\hpi_\lambda$. Define
  \begin{align}
    \label{alpha13}
     \alpha_k := \left(\lambda(1-\lambda)\right)^k.
  \end{align}
 Then  $\alpha\in\cA^+$ and the random excursions $\big(\vep^{(i)}[\eta]\big)_{i\in\Z}$ are i.i.d.~with distribution $\nusola$,
the soliton components $(D_k\eta)_{k\ge0}$ are mutually independent and  the $k$-soliton component $(D_k\eta(j))_{j\in\Z}$ is a sequence of i.i.d.~Geometric$(1-q_k(\alpha))$ random variables. As a consequence, the measure $\pi_\lambda$ is $T$-invariant.
\end{corollary}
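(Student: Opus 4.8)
The plan is to identify the Palm measure $\hpi_\lambda$ with the measure $\hmu_\alpha$ of Theorem~\ref{teo9} for the stated choice of $\alpha$, and then to harvest every assertion of the Corollary directly from Theorems~\ref{teo9} and~\ref{t2}. First I would note that under $\pi_\lambda$ the associated walk $W\eta$ defined by \eqref{x11} is a nearest-neighbor random walk with i.i.d.\ increments, taking value $+1$ with probability $\lambda$ and $-1$ with probability $1-\lambda$. Since $\lambda<\tfrac12$ the walk has strictly negative drift, so it has infinitely many records to the left and right of the origin almost surely, and the record density is $1-2\lambda>0$; in particular the Palm transform $\hpi_\lambda=\Palm(\pi_\lambda)$ is well defined.

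The crucial step is to show that under $\hpi_\lambda$ the excursions $(\vep^{(i)})_{i\in\Z}$ are i.i.d., with common law the distribution of a single random-walk excursion. I would establish this through the renewal (descending-ladder) structure of the walk: the records are exactly the first-passage epochs to successive minimal levels, so conditioning on Record~$0$ at the origin and applying the strong Markov property at each record makes the successive inter-record pieces of the walk i.i.d.; each excursion $\vep^{(i)}$ is a deterministic function of the $i$-th piece, hence the $\vep^{(i)}$ are i.i.d. By the Lemma on random walks this common excursion law is precisely $\nu_{\alpha}$ with $\alpha_k=(\lambda(1-\lambda))^k$, that $\alpha\in\cA^+$, and that $Z_\alpha=(1-\lambda)^{-1}$. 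Consequently $\hpi_\lambda=\hmu_\alpha$, which is exactly the input required by Theorem~\ref{teo9}.

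Given this identification the remaining conclusions follow with essentially no further work. Since $\alpha\in\cA^+\subseteq\cA$, part~a of Theorem~\ref{teo9} applies to $\eta$ and yields that $\zeta=D\eta$ has entries $\zeta_k(j)$ that are mutually independent and Geometric$(1-q_k(\alpha))$ for all $j\in\Z$ and $k\ge1$. In particular the components $(D_k\eta)_{k\ge1}$ are mutually independent and each $(D_k\eta(j))_{j\in\Z}$ is a sequence of i.i.d.\ Geometric$(1-q_k(\alpha))$ random variables, as claimed. For $T$-invariance I would invoke part~a of Theorem~\ref{t2}: because $\alpha\in\cA^+$, the measure $\mu_\alpha:=\Palm^{-1}(\hmu_\alpha)$ is translation invariant and $T$-invariant. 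Finally, since $\Palm^{-1}\circ\Palm$ is the identity on translation invariant measures with positive record density $1-2\lambda$, we get $\mu_\alpha=\Palm^{-1}(\hpi_\lambda)=\pi_\lambda$, so $\pi_\lambda$ itself is $T$-invariant.

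The main obstacle is the i.i.d.-excursion claim of the second paragraph: one must make the ladder/renewal decomposition rigorous for a bi-infinite walk and verify that it is compatible with the Palm conditioning at the origin, so that $\hpi_\lambda$ genuinely coincides with the concatenation measure $\hmu_\alpha$ built from i.i.d.\ copies of $\nu_\alpha$. Once that identity is in place, everything else is a matter of citing the earlier results and the Palm inversion formula.
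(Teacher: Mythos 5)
Your proposal is correct and follows essentially the same route as the paper: the identification of the excursion law under $\hpi_\lambda$ with $\nu_\alpha$ for $\alpha_k=(\lambda(1-\lambda))^k$ is exactly the content of the paper's Random walks lemma, and the remaining claims are harvested from Theorems~\ref{teo9} and~\ref{t2} together with Palm inversion, just as the paper intends. The i.i.d.-excursion structure of $\hpi_\lambda$ via the renewal/ladder decomposition that you flag as the main obstacle is indeed the one point the paper leaves implicit, and your sketch of it is the right one.
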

\begin{remark}  [Mean number of solitons per site] \rm
  Denote by $\delta_k$ the mean number of $k$-solitons per site under the product measure $\pi_\lambda$. It is given by $\delta_k=\rho_k\,(1-2\lambda)$, where $\rho_k$ is the mean number of $k$-solitons per excursion, that is between successive records and $(1-2\lambda)^{-1}$ is the mean distance between successive records under $\pi_\lambda$.   Kuniba and Lyu \cite{kl}  have computed an explicit expression for $\delta_k$ in terms of $\lambda$. 
\end{remark}

Corollary \ref{c3} is a special case of the next corollary for Markov chains.
\begin{corollary}[Markov chains and Ising models]
  \label{c4}
Let $Q=(Q(i,j))_{i,j\in\{0,1\}}$ be the transition matrix of a Markov chain in $\{0,1\}$ and assume that the stationary probability measure $(p_0,p_1)$ of $Q$ satisfies $p_1\in(0,\frac12)$. Let  $\pi_{Q}$ be the distribution of a double infinite stationary trajectory of the chain. Define $\hpi_Q:=\Palm(\pi_Q)$ and $\eta$ be a configuration with law $\hpi_Q$. Define $\alpha=(\alpha_k)_{k\ge1}$ by $\alpha_k := ab^k$ for $a,b$ defined in function of $Q$ by~\eqref{pqrel}.
Then $\alpha\in \cA^+$ and the random excursions $\big(\vep^{(i)}[\eta]\big)_{i\in\Z}$ are i.i.d.~with distribution $\nusola$,
the soliton components $(D_k\eta)_{k\ge0}$ are mutually independent and the $k$-soliton component $(D_k\eta(j))_{j\in\Z}$ is a sequence of i.i.d.~Geometric$(1-q_k(\alpha))$ random variables. As a consequence, $\mu_Q$ is $T$-invariant.
\end{corollary}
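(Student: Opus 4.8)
The plan is to reduce the statement to Theorems \ref{teo9} and \ref{t2} by showing that the record Palm measure $\hpi_Q$ coincides with the measure $\hmu_\alpha$ obtained by concatenating i.i.d.~excursions of law $\nusola$, for the $\alpha$ in \eqref{aQ}. Granting this identification, item a) of Theorem \ref{teo9} immediately gives that the components $(D_k\eta)_{k\ge1}$ are independent with $(D_k\eta(j))_{j\in\Z}$ i.i.d.~Geometric$(1-q_k(\alpha))$; and since $\alpha\in\cA^+$ by Lemma \ref{c4e}, item a) of Theorem \ref{t2} yields that $\mu_\alpha:=\Palm^{-1}(\hmu_\alpha)$ is $T$-invariant. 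Because $\pi_Q$ is translation invariant with record density $1-2p_1>0$ and, thanks to $\alpha\in\cA^+$, finite mean inter-record distance (see \eqref{inpiu}), the anti-Palm transform inverts the Palm transform, so $\pi_Q=\Palm^{-1}(\hpi_Q)=\Palm^{-1}(\hmu_\alpha)=\mu_\alpha$; this is exactly the asserted $T$-invariance of $\mu_Q=\pi_Q$.

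The two inputs I would assemble are the marginal excursion law and the independence of excursions. For the marginal, Lemma \ref{c4e} already computes that a single excursion of $W\eta$ under $\mu_Q$ has law $\nusola$ with $\alpha_k=ab^k$ as in \eqref{pqrel}, that $\alpha\in\cA^+$, and that $Z_\alpha=1/Q(0,0)$. For the independence, the essential observation is that the records furnish a regeneration structure: each record $i$ sits at a position where the walk $W\eta$ attains a new all-time minimum, and since entering a new minimum forces a down step, $\eta$ equals $0$ at every record. The record $i+1$ is the first time after record $i$ that the forward walk descends one level below $-i$, i.e.~a first-passage (stopping) time for the forward filtration, and the excursion $\vep^{(i)}$ is a measurable function of the trajectory of $\eta$ between these two stopping times. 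Because $\mu_Q$ is Markov and the state at every record is the fixed value $0$, the strong Markov property decouples the trajectory after record $i$ from everything to its left; iterating, the family $(\vep^{(i)})_{i\in\Z}$ is i.i.d., and by time-homogeneity the common law is the forward-excursion law from state $0$, namely $\nusola$. Finiteness of all excursions, needed for this to be a genuine probability law, is guaranteed by $p_1<\frac12$, which makes $W\eta$ have negative drift so that new minima occur almost surely. This proves $\hpi_Q=\hmu_\alpha$.

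I expect the delicate point to be precisely this regeneration argument: one must verify that ``record $i+1$'' is a stopping time for the forward process started at record $i$ and that conditioning on $0$ being a record does not disturb the i.i.d.~structure of the two-sided family. Note that here every excursion $\vep^{(i)}$---including those to the left of the origin---is read left-to-right and is a forward excursion from its left record to its right record, so the argument is purely one-sided and no time-reversal of the chain is required; the doubly infinite i.i.d.~structure then follows by applying the strong Markov property successively at the records, exactly as in the Bernoulli case of Corollary \ref{c3}, of which the present statement is the natural generalization. Once the identification $\hpi_Q=\hmu_\alpha$ is in hand, the remaining conclusions are the direct citations of Theorems \ref{teo9} and \ref{t2} described above.
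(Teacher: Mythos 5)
Your proposal is correct and follows essentially the route the paper intends: Lemma \ref{c4e} supplies the marginal excursion law $\nu_\alpha$ with $\alpha_k=ab^k\in\cA^+$, the Markov (regeneration) structure at records gives the i.i.d.\ property so that $\hpi_Q=\hmu_\alpha$, and Theorems \ref{teo9} and \ref{t2} then deliver the independent Geometric components and the $T$-invariance. Your explicit discussion of the strong Markov property at record times and of why conditioning on a record at the origin does not disturb the i.i.d.\ structure is a useful elaboration of a step the paper leaves implicit, but it is the same argument.
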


\begin{remark}[Infinite expected excursion size]\label{iees} \rm
When $\alpha\in\cA\setminus\cA^+$, the mean excursion size under $\nu_\alpha$ is infinite and $\hmu_\alpha$ defined in Theorem \ref{teo9} has infinite mean inter-record distance. The independence of components is still valid in this case but $\Palm^{-1}(\hmu_\alpha)$ cannot be defined \cite{thorisson}. In particular, when $\alpha$ is given by \eqref{alpha13} with $\lambda=\frac12$, $\nu_\alpha$ is the law of an excursion of the symmetric random walk, $\hmu_\alpha$ is well defined but its inverse-Palm measure is not, as the density of records is 0. 
\end{remark}

\begin{proof}
  [Proof of Theorem \ref{t2}] 
  
  a) Since $\alpha\in\cA^+$ the mean inter-record distance is finite under $\hmu_\alpha$ \eqref{inpiu} and therefore the measure $\mu_\alpha$ is well defined and translation invariant, as we saw in \S\ref{ac11}. The fact that $\mu_\alpha$ is $T$-invariant will follow by Theorem \ref{t1} once we show that $\sum_kkE\zeta_{k}(j)<\infty$. Since $\zeta_k(j)$ is Geometric$(1-q_k(\alpha))$, the condition $\sum_kk E\zeta_k(j)<\infty$  is equivalent to $q(\alpha)\in \mathcal Q^+$ that follows by Theorem \ref{teonuovo}.
  
  \smallskip

b) Since $q\in \mathcal Q^+$ and $\zeta_k(j)$ is Geometric$(1-q_k)$ we have 
$\sum_kkE\zeta_{k}(j)<\infty$ and we can apply Theorem \ref{t1}.
\end{proof}

\section*{Acknowledgments}

We thank the referees of the Electronic Journal of Probability for their careful reading and for many helpful comments for the presentation of the results.
PAF thanks many illuminating discussions with Leo Rolla.

This project started when the first author was visiting Gran Sasso Science Institute in L'Aquila in 2016 and then developed during the stay of the authors at the Institut Henri  Poincare, Centre Emile Borel, during the trimester \emph{Stochastic Dynamics Out  of Equilibrium} in 2017. We thank both institutions for warm hospitality and support.


\begin{thebibliography}{99}








\bibitem{CroydonKatoSasadaTsujimoto17} Croydon, D. A., Kato, T., Sasada, M., and Tsujimoto, S. \emph{Dynamics of
the box-ball system with random initial conditions via Pitman's transformation.}
Preprint arXiv:1806.02147, 2018.

\bibitem{FG} Ferrari P. A., Gabrielli D. \emph{Box-ball system: soliton and tree decomposition of excursions.}  arXiv:1906.06405
 

\bibitem{FNRW} Ferrari, P. A., Nguyen, C., Rolla, L. T., and Wang, M. \emph{Soliton decomposition of the Box-Ball-System}. Preprint arXiv:1806.02798, 2018.

\bibitem{kl} Kuniba A., Lyu H. \emph{Large deviations and one-sided scaling limit of randomized multicolor box-ball system.} arXiv:1808.08074



\bibitem{levine-lyu-pike} Levine, L., Lyu, H., and Pike, J. \emph{Double jump phase transition in a soliton cellular automaton.} Preprint. arXiv:1706.05621, 2017.


\bibitem{MR1676282} Stanley, R. P. \emph{Enumerative combinatorics.} Vol. 2, vol. {\bf 62} of Cambridge Studies
in Advanced Mathematics. Cambridge University Press, Cambridge, 1999. With a
foreword by Gian-Carlo Rota and appendix 1 by Sergey Fomin.


\bibitem{TS} Takahashi, D., and Satsuma, J. \emph{A soliton cellular automaton.} Journal of The Physical Society of Japan {\bf 59}, 10 (1990), 3514-3519.


\bibitem{thorisson} Thorisson, H. \emph{Coupling, stationarity, and regeneration.} Probability and its Applications (New York). Springer-Verlag, New York, 2000.
	



 \end{thebibliography}

\end{document}